\newtheorem{theorem}{Theorem}[section]
\newtheorem{proposition}[theorem]{Proposition}
\newtheorem{corollary}[theorem]{Corollary}
\newtheorem{lemma}[theorem]{Lemma}
\theoremstyle{definition}
\theoremstyle{remark}
\numberwithin{equation}{section}
\newcommand{\al}{\alpha}
\newcommand{\be}{\beta}
\newcommand{\de}{\delta}
\newcommand{\ep}{\varepsilon}
\newcommand{\ga}{\gamma}
\newcommand{\ka}{\kappa}
\newcommand{\la}{\lambda}
\newcommand{\si}{\sigma}
\newcommand{\vp}{\varphi}
\newcommand{\De}{\Delta}
\newcommand{\La}{\Lambda}
\newcommand{\Si}{\Sigma}
\newcommand{\Om}{\Omega}
\def\RR{\mathbb{R}}
\renewcommand\SS{\mathbb{S}}
\newcommand\bh{{\bar h}}
\newcommand{\cH}{{\mathcal H}}
\newcommand{\cW}{{\mathcal W}}
\newcommand{\pd}{\partial}
\newcommand\minus\backslash
\newcommand{\id}{{\rm id}}
\newcommand\lan\langle
\newcommand\ran\rangle
\newcommand{\Span}{\operatorname{span}}
\DeclareMathOperator\dist{dist}
\newcommand\uno{1}
\renewcommand\leq\leqslant
\renewcommand\geq\geqslant
\newlength{\intwidth}
\begin{document}

\title[Nodal sets in each conformal class]{Prescribing the nodal set of the first eigenfunction in each
  conformal class}

\author{Alberto Enciso}
\address{Instituto de Ciencias Matem\'aticas, Consejo Superior de
  Investigaciones Cient\'\i ficas, 28049 Madrid, Spain}
\email{aenciso@icmat.es}

\author{Daniel Peralta-Salas}
\address{Instituto de Ciencias Matem\'aticas, Consejo Superior de
  Investigaciones Cient\'\i ficas, 28049 Madrid, Spain}
\email{dperalta@icmat.es}

\author{Stefan Steinerberger}
\address{Department of Mathematics, Yale University, New Haven, CT 06511, USA}
\email{stefan.steinerberger@yale.edu}

%
%
\begin{abstract}
We consider the problem of prescribing the nodal set
  of the first nontrivial eigenfunction of the Laplacian in a conformal class. Our main result is that, given
  a separating closed hypersurface $\Si$ in a compact Riemannian manifold
  $(M,g_0)$ of dimension $d\geq 3$, there is a metric $g$ on $M$ conformally equivalent to $g_0$ and with the same volume such that the nodal set of
  its first nontrivial eigenfunction is a $C^0$-small deformation of
  $\Si$ (i.e., $\Phi(\Si)$ with $\Phi:M \rightarrow M$ a
  diffeomorphism arbitrarily close to the identity in the $C^0$~norm). 
\end{abstract}
\maketitle

\section{Introduction}
\label{S.intro}

Let $M$ denote a closed manifold of dimension
$d\geq3$ endowed with a fixed Riemannian metric $g_0$. The
eigenfunctions of $M$ satisfy the equation
\[
\De u_k=-\la_k u_k\,,
\]
where $0=\la_0<\la_1\leq\la_2\leq\dots$
are the eigenvalues of $M$ and $\De$ is the Laplace operator of the
manifold. The zero set $u_k^{-1}(0)$ is called the {\em nodal set}\/
of the eigenfunction, and each connected component of $M\backslash
u_k^{-1}(0)$ is known as a {\em nodal domain}\/.

The study of the eigenvalues and eigenfunctions of a manifold is a
classic topic in geometric analysis with a number of important open
problems~\cite{Ya82,Ya93}. A fundamental fact is that the behavior of
$\la_k$ as $k\to\infty$ is extremely rigid, as captured by Weyl's
law. Major open questions roughly related to this rigidity phenomenon
concern the asymptotic behavior as $k\to\infty$ of, say, the number of
nodal domains and the measure of the nodal set of $u_k$~\cite{DF,HS} or the number of critical points of the eigenfunctions~\cite{JN}.

In striking contrast, the low-energy behavior of the eigenvalues is
quite flexible. A landmark in this direction is the proof that one
can prescribe an arbitrarily high number of eigenvalues of the
Laplacian, including multiplicities. More precisely~\cite{Colin2},
given any finite sequence of positive real numbers $\la_1\leq
\la_2\leq \dots\leq \la_N$, there is a metric~$g$ on $M$ having this
sequence as its first $N$ nontrivial eigenvalues. This cannot be
accomplished if we require the metric~$g$ to be conformally equivalent
to the original metric $g_0$ and of the same volume; in particular, it
is known~\cite{Soufi,Friedlander,LiYau} that the supremum of the first nontrivial
eigenvalue $\la_1$ corresponding to~$g$ is finite as~$g$ ranges over
the set of metrics conformal to $g_0$ and with the same volume. 

The nodal sets of the low-energy eigenfunctions turn out to be
surprisingly flexible too. Indeed, it has been recently shown that~\cite{EP14}, given
a separating hypersurface $\Si$ in $M$, there is a metric~$g$ on the
manifold for which the nodal set $u_1^{-1}(0)$ of the first
eigenfunction is precisely~$\Si$. Throughout, we will assume that the hypersurfaces are all smooth, connected, compact and without boundary. We recall that a
hypersurface $\Si$ is {\em separating}\/ if its complement $M\minus
\Si$ is the union of two disjoint open sets. Similar results for higher
eigenfunctions have been established as well. 
The goal of this paper is to show that the nodal set $u_1^{-1}(0)$ of
the first eigenfunction can be prescribed, up to a small deformation, even if we require the
metric~$g$ to be conformal to the original metric $g_0$ and of the
same volume. More precisely, we will prove the following theorem.

\begin{theorem}\label{T.main}
  Let $M$ be a $d$-manifold ($d\geq3$) endowed with a Riemannian
  metric $g_0$ and let $\Si$ be a separating hypersurface. Then, given any $\de>0$,
  there is a metric $g$ in $M$ conformally equivalent to $g_0$ and
  with the same volume such that its first eigenvalue $\la_1$ is simple and the nodal set of its first eigenfunction $u_1$ is $\Phi(\Si)$, where $\Phi$ is a diffeomorphism of
  $M$ whose distance to the identity in $C^0(M)$ is at most $\de$.  
\end{theorem}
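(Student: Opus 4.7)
The plan is to build a conformal factor $\rho$ that is close to $1$ on most of $M$ but very small on a thin tubular neighborhood of $\Si$, thereby nearly decoupling the two components $\Om_\pm$ of $M \minus \Si$. A conformal change $g = \rho^2 g_0$ (with $d \geq 3$) turns the eigenvalue equation into the weighted problem
$$
\Div_{g_0}\bigl(\rho^{d-2}\nabla u\bigr) = -\la\, \rho^d u,
$$
the volume constraint becomes $\int_M \rho^d\,dV_{g_0} = \Vol(M, g_0)$, and the Rayleigh quotient among functions with $\int u \rho^d\, dV_{g_0} = 0$ reads
$$
\la_1(g) = \inf_u \frac{\int_M \rho^{d-2} |\nabla u|_{g_0}^2\, dV_{g_0}}{\int_M u^2 \rho^d\, dV_{g_0}}.
$$
I would then take a two-parameter family $\rho_{\ep,\de}$ equal to $1$ outside a tube $U_\de$ of width $\de$ about $\Si$, equal to $\ep$ on an inner sub-tube, with a smooth transition, and rescaled by a factor $1 + o(1)$ so that the volume constraint holds exactly.

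Using a regularized version of $u^* = \chi_{\Om_+} - c\chi_{\Om_-}$ (with $c$ chosen so $u^*$ is orthogonal to constants in the weighted measure) as a test function, one gets $\la_1(g_{\ep,\de}) \leq C \ep^{d-2}/\de$. Choosing $\de = \ep^{(d-2)/2}$ then yields $\la_1 \to 0$ and $\de \to 0$ simultaneously. The complementary step is a uniform spectral gap $\la_2(g_{\ep,\de}) \geq c_0 > 0$: viewing $(\Om_\pm, g_{\ep,\de})$ as domains with Neumann boundary on $\Si$, one checks directly from the Rayleigh quotient on each $\Om_\pm$ (using that $\rho \equiv 1$ on the macroscopic piece $\Om_\pm \minus U_\de$) that the first nontrivial Neumann eigenvalues $\mu_1(\Om_\pm, g_{\ep,\de})$ are bounded below independently of $\ep, \de$; a standard tunneling argument then gives $\la_2(g_{\ep,\de}) \to \min(\mu_1(\Om_+), \mu_1(\Om_-)) > 0$. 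In particular, $\la_1$ is simple for small $\ep$, and the variational characterization forces the normalized eigenfunction $u_1$ to converge to $u^*$ in the weighted $L^2$ norm.

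Interior elliptic regularity on $M \minus U_\de$, where the operator is uniformly elliptic with smooth coefficients, upgrades this $L^2$ convergence to $C^k$ convergence on compact subsets of $\Om_\pm \minus U_\de$; hence for small $\ep$ the eigenfunction $u_1$ is strictly positive on $\Om_+ \minus U_\de$ and strictly negative on $\Om_- \minus U_\de$, so its nodal set lies inside $U_\de$. To locate the nodal set more precisely, I would work in Fermi coordinates $(s, y) \in (-\de, \de) \times \Si$ and exploit that the weighted equation is dominated by its transverse part $\pa_s(\rho^{d-2} \pa_s u) \approx -\la \rho^d u$, which, for small $\la$, reduces to a one-dimensional ODE whose solutions with opposite-sign endpoint data are strictly monotone in $s$. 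A quantitative adiabatic reduction should then give $\pa_s u_1 > 0$ throughout $U_\de$, so that $u_1^{-1}(0) = \{s = \vp(y)\}$ is a smooth graph with $\|\vp\|_\infty \leq \de$. The diffeomorphism $\Phi$ is obtained by translating $\Si$ along normal geodesics by $\vp(y)$ and interpolating to the identity outside $U_{2\de}$, so that $\dist_{C^0}(\Phi, \id) \leq 2\de$.

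The main obstacle will be executing the transverse-monotonicity step: inside the thin tube $U_\de$ the weighted operator is degenerate (since $\rho \leq \ep$) and the only direct information on $u_1$ is macroscopic $L^2$ proximity to the piecewise-constant $u^*$. Carrying this step out rigorously will require a careful engineering of the transition profile of $\rho_{\ep, \de}$ so that transverse dissipation strictly dominates tangential variation, together with refined weighted Schauder estimates and a barrier-type comparison with explicit one-dimensional weighted solutions on $U_\de$.
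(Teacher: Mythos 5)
Your overall strategy -- conformal factor collapsing a tubular neighborhood of $\Si$ so that $\la_1\to 0$ while $\la_2$ stays bounded below, forcing $u_1$ to be nearly locally constant of opposite signs on the two sides -- is exactly the paper's strategy, and your test-function bound, spectral-gap argument, and interior regularity away from the tube all match the structure of Lemma~\ref{L}, Lemma~\ref{L.uniform} and Proposition~\ref{P.Ck}.

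The genuine gap is precisely the one you flag yourself: locating the nodal set \emph{inside} the degenerate tube. Your plan to prove $\pd_s u_1>0$ directly from the weighted PDE via an adiabatic/barrier argument is not carried out, and it is not clear it can be, because inside the tube you only have weak $L^2$-type control of $u_1$ and the weighted operator degenerates like $\ep$. The paper sidesteps this entirely by two devices you are missing. First, the conformal factor is taken to be \emph{exactly constant} equal to $\ep$ throughout $\Om_\eta$ (a discontinuous metric, smoothed only at the very end in Corollary~\ref{C.smooth}). Since the conformal factor is constant there, the eigenvalue equation inside $\Om_\eta$ reads $\De_{g_0}u_\ep=-\ep\,\la_\ep\, u_\ep$, so as $\ep\to 0$ the right-hand side vanishes and $u_\ep$ converges in $C^k(\Om_\eta)$ to the solution $h$ of the \emph{non-degenerate, $g_0$-}harmonic Dirichlet problem with constant boundary data $c_\pm$ (Proposition~\ref{P.Ck}). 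This is what converts your inaccessible degenerate transverse ODE into a clean, uniformly elliptic boundary value problem. Second, the parameters are kept \emph{independent}: $\eta$ is fixed small first and $\ep\to 0$ afterward. Your coupling $\de=\ep^{(d-2)/2}$ means the tube shrinks while the operator degenerates, making the interior limit much harder to control; in the paper, the nodal set is first identified as $h^{-1}(0)$ for fixed $\eta$, and only the $\eta\to 0$ analysis of the harmonic function $h$ (Proposition~\ref{P.harm}, by Fourier expansion in the rescaled transverse variable) shows that $h^{-1}(0)$ is a graph $C^0$-close to $\Si$ with $0$ a regular value. Thom's isotopy theorem then transfers this to $u_\ep$ and finally to the smooth approximating metrics. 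Without the constant-conformal-factor simplification and the decoupling of $\ep$ from $\eta$, your ``main obstacle'' paragraph is not a technicality but the missing core of the proof.
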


The proof of this result is based on the explicit construction of a conformal
factor which is of order~$\ep$ in a neighborhood $\Om_\eta$ of
$\Sigma$ of width $\eta$. Geometrically, this ensures that the
manifold, endowed with the rescaled metric, has the structure of a
dumbbell, as depicted in Figure~1.  The basic idea behind the proof of
the theorem is to exploit this dumbbell structure through a fine
analysis of the first eigenfunction. More precisely, we show that as $\ep$ tends to zero the first
eigenfunction approximates a harmonic function in
$\Om_\eta$ with constant boundary values. In turn, the zero set of
this harmonic function can then be controlled provided that $\eta$ is small. Related results on level sets with prescribed
topologies were derived, using completely different methods, for
Green's functions in~\cite{JDG}, for harmonic functions in $\RR^n$
in~\cite{Adv} and for eigenfunctions of the Laplacian in~\cite{EP14}. 

\begin{figure}[t]
\begin{center}
\begin{tikzpicture}[scale = 0.85]
\draw [thick] (-7,0) to [out=270,in=180] (-5,-2)  to [out=0,in=180] (-3,-2) to [out=0, in=270] (-2,0) 
to [out=90, in = 0] (-3,2) to [out=180, in = 0] (-3,2) to [out=180, in = 0] (-5,2) to [out=180, in = 90] (-7,0);

\draw [ultra thick] (-4,2) to [out=250,in=110] (-4,-2);
\draw [dashed] (-4.4,2) to [out=250,in=110] (-4.4,-2);
\draw [dashed] (-3.6,2) to [out=250,in=110] (-3.6,-2);
\node (A) at (-4.1,0.9) {$\Sigma$};

\draw [thick] (0,0) to [out=270,in=180] (2,-2) to [out=0,in=180] (3,-0.5) to [out=0,in=180] (4,-2) to [out=0, in=270] (5,0) 
to [out=90, in = 0] (4,2) to [out=180, in = 0] (4,2) to [out=180, in = 0] (3,0.5) to [out=180, in = 0] (2,2) to [out=180, in = 90] (0,0);

\draw [ultra thick] (3,0.5) to [out=250,in=110] (3,-0.5);
\draw [dashed] (3.3,0.6) to [out=250,in=110] (3.3,-0.6);
\draw [dashed] (2.7,0.6) to [out=250,in=110] (2.7,-0.6);
\node (A) at (-4.1,0.9) {$\Sigma$};

\end{tikzpicture}
\end{center}
\caption{A separating hypersurface $\Sigma$ and the $\eta$-neighbourhood $\Om_\eta$. Making the metric small in $\Om_{\eta}$ turns $M$ into a dumbbell.}
\end{figure}
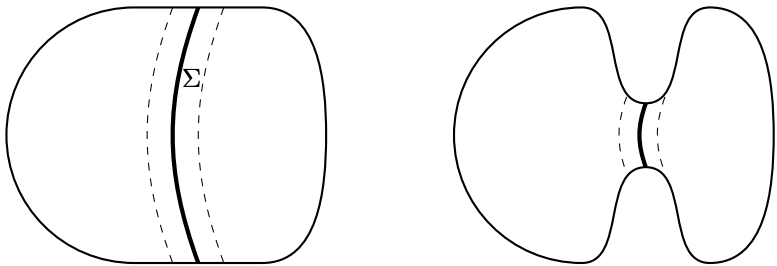

An easy application of Theorem~\ref{T.main} enables us to prove that,
given any Riemannian manifold $(M,g_0)$, there is a metric conformally equivalent to $g_0$ and with the same volume such that the
eigenfunction $u_1$ has as many isolated critical points as one
wishes.

\begin{theorem}\label{T.cp}
  Let $M$ be a $d$-manifold endowed with a Riemannian metric $g_0$,
  with $d\geq3$, and let $N$ be a positive integer. Then there is a
  metric $g$ on $M$, conformally equivalent to $g_0$ and with the same
  volume, such that its first nontrivial eigenfunction $u_1$ has at least~$N$
  non-degenerate critical points.
\end{theorem}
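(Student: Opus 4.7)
The plan is to apply Theorem~\ref{T.main} to a separating hypersurface $\Si$ whose enclosed region is a chain of $N$ small balls joined by very thin tubes, so that the first eigenfunction $u_1$ restricts to the first Dirichlet eigenfunction of this chain; a dumbbell analysis will then force at least one non-degenerate local maximum per ball. Concretely, fix pairwise disjoint $g_0$-geodesic balls $B_1,\dots,B_N\subset M$ of small radius $r$, connect consecutive balls by smooth thin tubes $T_1,\dots,T_{N-1}$ of radius $\rho\ll r$, and set
\[
D=\bigcup_{i=1}^N B_i\cup\bigcup_{j=1}^{N-1} T_j,\qquad \Si=\partial D.
\]
Then $\Si$ is a smooth connected closed separating hypersurface, so Theorem~\ref{T.main} produces a conformal metric $g$ of the same volume as $g_0$, with $\la_1$ simple and $u_1^{-1}(0)=\Phi(\Si)$ for some diffeomorphism $\Phi$ arbitrarily $C^0$-close to the identity. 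Let $\cD=\Phi(D)$ be the nodal domain on which $u_1>0$. Then $u_1|_\cD$ is a positive Dirichlet eigenfunction of $-\De_g$ on the connected domain $\cD$, and therefore coincides, up to a positive scalar, with the first Dirichlet eigenfunction of $(\cD,g|_\cD)$.

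Inspecting the construction in Theorem~\ref{T.main}, the conformal factor equals $1$ outside an $\eta$-neighborhood $\Om_\eta$ of $\Si$ and is of order $\ep\ll 1$ inside $\Om_\eta$. Choosing $\rho<\eta$ places every tube $T_j$ inside $\Om_\eta$, so in the metric $g$ the tubes become even thinner while each bell $B_i$ retains its original $g_0$ geometry in its interior. The same kind of dumbbell asymptotic that drives Theorem~\ref{T.main}, applied now within $\cD$ rather than on all of $M$, shows that as $\ep\to 0$ the normalized first Dirichlet eigenfunction of $(\cD,g|_\cD)$ converges in $C^2\loc$ on each bell to a positive linear combination $\sum_{i=1}^N a_i\phi_i$, where $\phi_i$ is the first Dirichlet eigenfunction of $(B_i,g_0)$ extended by zero and $a_i>0$. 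For $r$ small, each $\phi_i$ is a small perturbation of the principal eigenfunction of a Euclidean ball and hence has a unique non-degenerate interior maximum; by $C^2$-stability the limit — and therefore $u_1$ itself, for $\ep$ and $\rho/r$ small enough — possesses at least $N$ non-degenerate interior maxima in $\cD$, which furnish the desired critical points.

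The main technical point is the dumbbell convergence on $(\cD,g|_\cD)$ together with the positivity $a_i>0$: the first Dirichlet eigenvalue of $\cD$ is asymptotically close to (and splits from) the $N$-fold degenerate limiting eigenvalue of the disjoint union of bells, and the first eigenfunction is singled out inside the limiting $N$-dimensional eigenspace by its strict positivity on all of $\cD$. Once this is in place, non-degeneracy of the $N$ critical points of $u_1$ is a soft $C^2$-continuity consequence, and no additional perturbation of the conformal class is needed.
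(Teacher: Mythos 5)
Your plan fails at its central step, because the asymptotic picture you invoke is actually opposite to what the paper's construction produces. Theorem~\ref{T.main} is proved by making the conformal factor small only in a thin shell $\Om_\eta$ around $\Si=\partial D$, and Proposition~\ref{P.Ck} shows that as $\ep\to 0$ the first eigenfunction $u_\ep$ converges to the \emph{constant} $c_-$ on the connected set $D\setminus\Om_\eta$. Hence the restriction of $u_1$ to $\cD$ is close to a plateau — approximately constant in the bulk, dropping sharply to zero through the thin shell — not to a sum $\sum a_i\phi_i$ of single-ball Dirichlet eigenfunctions with bumps in each ball. The shell around the tubes does not decouple the balls from one another; it decouples the interior of $D$ from the exterior, which is a different thing entirely. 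Moreover, your device of taking $\rho<\eta$ so that the tubes fall inside $\Om_\eta$ is incompatible with the construction: Step~1 requires $\eta$ small enough that $\Om_\eta$ is diffeomorphic to $(-\eta,\eta)\times\Si$ and $\Om_\eta^c$ has exactly two components, which forces $\eta$ to be below the focal radius of $\Si$, hence $\eta<\rho$. Finally, even if the dumbbell decoupling you describe did hold, the claim that all $a_i>0$ in the limit is not justified by strict positivity of $u_\ep$ at positive $\ep$; in the limit of decoupled balls with generically distinct first eigenvalues, the first Dirichlet eigenfunction concentrates on the ball with the smallest eigenvalue and tends to zero on the others.

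The paper's route is different and more robust, and you should note that your choice of $D$ would not work with it either: a chain of $N$ balls connected by tubes is contractible, so $\sum_i b_i(\overline D)=1$ and Morse theory would only guarantee a single critical point. The paper instead picks $D$ with rich topology (e.g.\ $\Si$ a connected sum of $N$ copies of $\SS^1\times\SS^{d-2}$, so the handlebody $D$ has first Betti number $N$), applies Theorem~\ref{T.main} to get a metric with nodal set $\Phi(\Si)$ and nonvanishing gradient there, and then invokes Uhlenbeck's genericity theorem to replace $g$ by a $C^{m+1}$-small conformal perturbation $\tilde g$ whose first eigenfunction is Morse. Non-degeneracy is thereby obtained as a soft genericity statement rather than through the quantitative $C^2$-stability you assert, and Morse theory with boundary on the nodal domain $\tilde D$ bounds the number of critical points below by $\sum_i b_i(\overline{\tilde D})\geq N$. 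If you want to make your idea of forcing bumps in individual balls work, you would need a construction in which the conformal factor is made small on (thickened neighborhoods of) full hypersurfaces surrounding each tube separately — but then $\Si$ is no longer what the nodal set of $u_1$ tracks, and the whole application of Theorem~\ref{T.main} falls apart.
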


It is worth recalling that, on surfaces, Cheng~\cite{Cheng} gave a
topological bound for the number of critical points of the
$k^{\mathrm{th}}$ eigenfunction that lie on the nodal line. We do not know if results analogous to Theorems~\ref{T.main} and~\ref{T.cp} hold for surfaces.
The proof of Theorem~\ref{T.main} is given in Section~\ref{S.proof}, although the proofs of several technical lemmas are relegated to
Sections~\ref{S:L1}-\ref{S:coro}. The proof of Theorem~\ref{T.cp}, which
hinges on Theorem~\ref{T.main}, is then presented in Section~\ref{S.cp}.

\section{Proof of the main theorem}
\label{S.proof}

We divide the proof of Theorem~\ref{T.main} in five steps. In Step~1
we define a discontinuous metric $g_\ep$ that is conformal to $g_0$
and is of order~$\ep$ in a neighborhood $\Om_\eta$ of $\Sigma$ of
width $\eta$; here we have Lemma~\ref{L} showing
that the first nontrivial eigenvalue of $(M,g_\ep)$ is simple and
tends to zero as $\ep\to0$. In Step~2 we exploit the partial
regularity of the metric $g_\ep$ to obtain estimates in certain mixed
Sobolev norms (cf. Lemma~\ref{L.uniform}), which imply that the first
eigenfunction $u_\ep$ is H\"older continuous and that its $L^\infty$
norm is bounded uniformly in~$\ep$. These estimates are used in Step~3 to show that $u_\ep$ converges in $\Om_\eta$ to a harmonic function $h$ with constant boundary values on $\partial\Om_\eta$ (cf.\ Proposition~\ref{P.Ck}). In Step~4 the nodal set of $h$ is analyzed, the main result being Corollary~\ref{C.levelset} where we prove that it is a regular level set diffeomorphic to $\Sigma$ provided that the width $\eta$ is small. Finally, the proof of the theorem is completed in Step~5 taking the two independent parameters $\ep$ and $\eta$ sufficiently small, and using that $0$ is a regular value of $h$. 

\subsubsection*{Step 1: Defining a discontinuous metric}
For small enough $\eta>0$, the set
\[
\Om_\eta:=\big\{ x\in M: \dist_{g_0}(x,\Si)<\eta\big\}
\]
is diffeomorphic to $(-\eta,\eta)\times\Si$. Denoting by
$\Om_\eta^c:=M\backslash{\Om_\eta}$ the complement of the set $\Om_\eta$, let us consider the bounded
discontinuous function
\[
f_\ep(x):=\begin{cases}
\ep& \text{if }x\in\Om_\eta\,,\\
\ka & \text{if }x\in\Om_\eta^c\,.
\end{cases}
\]
Here the constant $\ka\equiv \ka(\ep,\eta)$ is defined as
\[
\ka:=\bigg(1+(1-\ep^{\frac d2})\frac{ |\Om_\eta|}{|\Om_\eta^c|}\bigg)^{2/d}\,,
\]
with $|\cdot|$ standing for the volume of a set computed with respect
to the metric $g_0$. With this choice of $\ka$, the volume of $M$ with
the
metric $g_\ep:=f_\ep g_0$ is independent of the choice of~$\ep$ and~$\eta$. Notice that $\ka$ is well-behaved since, for
$\ep, \eta$ sufficiently small,
\[
1\leq \ka\leq \bigg(1+ \frac{ |\Om_\eta|}{|\Om_\eta^c|}\bigg)^{2/d}\, \leq 2.
\]

The first (nontrivial) eigenfunction of $M$ with the discontinuous
metric $g_\ep$ is a minimizer of the Rayleigh quotient
\begin{align*}
q_\ep(u)&:=\frac{\int_M |du|_\ep^2\, dV_\ep}{\int_M u^2\, dV_\ep}\\[1mm]
&=\frac{\ep^{\frac
    d2-1}\int_{\Om_\eta} |du|^2+\ka^{\frac
    d2-1}\int_{\Om_\eta^c}|du|^2} {\ep^{\frac
    d2}\int_{\Om_\eta} u^2+\ka^{\frac
    d2}\int_{\Om_\eta^c} u^2}
\end{align*}
in the space of nonzero functions $u\in H^1(M)$ such that
\[
0=\int_Mu\, dV_\ep=\ep^{\frac
    d2}\int_{\Om_\eta} u+\ka^{\frac
    d2}\int_{\Om_\eta^c} u\,.
\]
Throughout we are denoting with a subscript $\ep$ the quantities
(norm, Riemannian measure) associated with the metric $g_\ep$ and we are omitting
the measure under the integral sign when it is the one
corresponding to~$g_0$. We will use the notation $\la_\ep\equiv
\la_{1,\ep}$ for the first nonzero eigenvalue of $(M,g_\ep)$, and call
$u_\ep\equiv u_{1,\ep}$ its corresponding eigenfunction, which we assume to be normalized to have unit norm:
\[
\int_M u_\ep^2 \, dV_\ep=\ep^{\frac
    d2}\int_{\Om_\eta} u_\ep^2+\ka^{\frac
    d2}\int_{\Om_\eta^c} u_\ep^2=1\,.
\]

The small-$\ep$ behavior of the first nontrivial eigenvalue $\la_\ep$ is
described in the following lemma, here in particular it is shown to be simple. The proof is presented in Section~\ref{S:L1}.
With a little more work, and using that $M\backslash \Om_\eta$ consists
of precisely two connected components because $\Si$ separates, the argument in fact implies that, as $\ep \rightarrow 0$, the second
eigenvalue tends to the smallest of the first nonzero Neumann eigenvalue
of each connected component. 

\begin{lemma}\label{L}
For any small but fixed $\eta$, the first nontrivial
eigenvalue $\la_\ep$ is simple and converges to zero as
$\ep\searrow0$ (more precisely, $\la_\ep\leq C\ep^{\frac d2-1}$).
\end{lemma}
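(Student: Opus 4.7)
The plan is to obtain the upper bound by an explicit test function in the Rayleigh quotient, and to prove simplicity by a compactness argument showing that the second nontrivial eigenvalue stays bounded away from zero.

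For the upper bound, let $M_+$ and $M_-$ denote the two components of $\Om_\eta^c$ (which exist because $\Si$ separates $M$). I would take a test function $\vp\in H^1(M)$ equal to $1$ on $M_+\setminus\Om_\eta$, equal to $-a$ on $M_-\setminus\Om_\eta$, and smoothly interpolating across $\Om_\eta$ with $|d\vp|_0\leq C/\eta$, where $a>0$ is chosen so that $\int_M \vp\, dV_\ep = 0$ (so $a=|M_+|/|M_-|+O(\ep^{d/2})$). The only contribution to $\int_M|d\vp|_\ep^2\,dV_\ep$ comes from $\Om_\eta$ and is bounded by $C(\eta)\ep^{d/2-1}$, while $\int_M\vp^2\,dV_\ep$ is uniformly bounded below for small $\ep$; hence $\la_\ep\leq q_\ep(\vp)\leq C(\eta)\ep^{d/2-1}$.

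For simplicity I would argue by contradiction, assuming that there exist $\ep_n\to 0$ and eigenfunctions $u_n^{(1)}, u_n^{(2)}$ (orthonormal in $L^2(dV_{\ep_n})$ and orthogonal to constants) with eigenvalues $\la_n^{(i)}\to 0$. The Rayleigh quotient gives $\int_{\Om_\eta^c}|du_n^{(i)}|^2\to 0$, so by Poincar\'e on each $M_\pm$ and boundedness of the means $a_n^{(i)}=\bar u_{n,+}^{(i)}$, $b_n^{(i)}=\bar u_{n,-}^{(i)}$ (which follows from the normalization), $u_n^{(i)}$ converges, along a subsequence, strongly in $H^1(\Om_\eta^c,g_0)$ to the piecewise constant $a^{(i)}\chi_{M_+}+b^{(i)}\chi_{M_-}$. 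The delicate step is controlling $u_n^{(i)}$ on $\Om_\eta$, where its unweighted $L^2$ norm can grow like $\ep_n^{-d/4}$. Here I would introduce the rescaling $v_n^{(i)}:=\ep_n^{d/4}u_n^{(i)}|_{\Om_\eta}$, so that $\|v_n^{(i)}\|_{L^2(\Om_\eta)}\leq 1$ by normalization, while the Rayleigh bound on $\Om_\eta$ yields $\|dv_n^{(i)}\|_{L^2(\Om_\eta)}^2\leq \ep_n\la_n^{(i)}\to 0$. By Rellich compactness, $v_n^{(i)}$ converges in $L^2(\Om_\eta)$ to a constant; the trace identification with $u_n^{(i)}$, which is bounded on the outside of $\pa\Om_\eta$, forces this constant to vanish, so $\ep_n^{d/2}\int_{\Om_\eta}(u_n^{(i)})^2\to 0$ and $\ep_n^{d/2}\int_{\Om_\eta}u_n^{(1)}u_n^{(2)}\to 0$.

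Passing to the limit in the mean-zero, unit-norm, and mutual orthogonality conditions (recalling $\ka\to 1$) then yields the three relations
\begin{align*}
a^{(i)}|M_+|+b^{(i)}|M_-|&=0,\\
(a^{(i)})^2|M_+|+(b^{(i)})^2|M_-|&=1,\\
a^{(1)}a^{(2)}|M_+|+b^{(1)}b^{(2)}|M_-|&=0.
\end{align*}
The first two relations confine the vectors $(a^{(i)}, b^{(i)})$ to a one-dimensional line in $\RR^2$ and force both vectors to be nonzero, so they are proportional; the third relation then collapses to $t=0$, a contradiction. Hence $\la_{2,\ep}\geq L>0$ for some $L=L(\eta)$ and all small $\ep$, which, combined with $\la_\ep\to 0$, gives simplicity of $\la_\ep$. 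The main obstacle is the behavior of $u_n^{(i)}$ on $\Om_\eta$, and the rescaling $v_n^{(i)}=\ep_n^{d/4}u_n^{(i)}$ is the essential device that makes the limit tractable.
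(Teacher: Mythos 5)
Your upper bound is the same as the paper's: a test function equal to constants on the two components of $\Om_\eta^c$, interpolating linearly (or smoothly, as you do) across $\Om_\eta$, with the constant chosen to enforce the zero-mean constraint; both give $\la_\ep\leq C(\eta)\ep^{d/2-1}$. Your proof of simplicity, however, takes a genuinely different route. The paper argues directly from the min-max characterization of $\la_{2,\ep}$: it introduces the orthogonal splitting $H^1(M)=\cH_1\oplus\cH_2$ into functions harmonic on $\Om_\eta$ and functions in $H^1_0(\Om_\eta)$ extended by zero, shows via Dirichlet eigenvalue estimates on $\Om_\eta$ that the $\cH_2$ component of any bounded-quotient function is $O(\ep^{1/2})$-negligible, and then bounds the Rayleigh quotient from below by the Neumann Rayleigh quotient of the harmonic part restricted to $\Om_\eta^c$, concluding $\la_{2,\ep}\geq (1-O(\ep^{1/2}))\mu_2$ with $\mu_2$ the smaller of the first nonzero Neumann eigenvalues of $\Om_\eta^\pm$. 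You argue instead by contradiction and compactness: the essential device is the rescaling $v_n^{(i)}=\ep_n^{d/4}u_n^{(i)}|_{\Om_\eta}$, which you correctly observe has $L^2$ norm bounded by normalization and gradient $L^2$ norm of size $O(\sqrt{\ep_n\la_n^{(i)}})$ by the Rayleigh bound; Rellich and the trace matching across $\pd\Om_\eta$ (the outside trace is bounded, so its $\ep_n^{d/4}$-multiple vanishes) then force the $L^2(\Om_\eta)$ limit to be zero, and the three limiting algebraic relations give the contradiction. Both proofs are correct. The paper's approach is more quantitative --- it explicitly identifies $\mu_2$ as the limit, which they note in passing --- whereas yours is softer but shorter and nicely isolates the precise scaling needed on the neck. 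One small slip that does not affect the argument: $\ka\to\ka_0=(1+|\Om_\eta|/|\Om_\eta^c|)^{2/d}\neq 1$ as $\ep\to0$, so your unit-norm limiting relation should read $(a^{(i)})^2|M_+|+(b^{(i)})^2|M_-|=\ka_0^{-d/2}$, but the vectors $(a^{(i)},b^{(i)})$ are still nonzero and the contradiction goes through unchanged.
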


\subsubsection*{Step 2: Anisotropic derivative estimates for the eigenfunction}

In Step~1 we defined a metric~$g_\ep$ that is
discontinuous across the boundary of the set $\Om_\eta$. Two redeeming
features of this metric are that it is smooth everywhere but on
$\pd\Om_\eta$ and that $g_\ep$ is in fact everywhere smooth along the directions that
are tangent to $\pd\Om_\eta$. 

In view of the discontinuity of the metric, a simple, convenient way
of exploiting this partial regularity is by considering
mixed Sobolev norms. In order to define them, it is convenient to use
the following approach. Denote by $U_0$ a neighborhood of $\Om_\eta$ where the signed distance $\rho$ to the hypersurface $\Si$ as measured with respect the metric~$g_0$
is a smooth function. Set
\[
Y:=\chi_0\, \nabla \rho
\]
where $\chi_0\equiv \bar\chi_0(\rho)$ is a fixed nonnegative smooth
function that only depends on $\rho$, is equal to 1 in a
neighborhood $U_1\subset U_0$ of $\Om_\eta$ and
vanishes outside $U_0$. With this definition, $Y$ defines a smooth
vector field in~$M$ that does not vanish in $U_1$. 

Let us now take a finite collection of vector fields $X_1,\dots, X_s$
that are everywhere orthogonal to $Y$ (i.e., $g_0(X_j,Y)=0$),
supported in $U_0$ and such that 
\[
\Span \{ Y|_x, X_1|_x,\dots,  X_s|_x\}= T_xM \qquad \text{for all
} x\in U_1\,.
\]
A simple transversality argument shows that in fact one can take $s=d+1$. Given a set $V\subset M$,
functions in $H^1(V)$ that additionally have $k$
weak derivatives in the directions tangent to $\Si$ that are also in
$H^1$ will be characterized through the mixed Sobolev norm
\[
\|u\|_{H^1H^k_{\mathrm T}(V)}:=\sum_{j_1+\cdots+ j_s\leq k} \|X_1^{j_1}\cdots
X_s^{j_s}u\|_{H^1(V)}+\|(1-\chi_0)\, u\|_{H^{k+1}(V)}\,.
\]
We will write $\|\cdot\|_{H^1H^k_{\mathrm T}} \equiv \|\cdot\|_{H^1H^k_{\mathrm T}(M)}$. The following lemma, which is proved in Section~\ref{S:L2}, establishes an upper bound for the mixed Sobolev norm of $u_\ep$.

\begin{lemma}\label{L.uniform}
For any nonnegative integer $k$, there are constants
$C_\ep$ and~$C$ depending on $k$ such that the normalized eigenfunction~$u_\ep$ satisfies 
\[
\|u_\ep\|_{H^1H^k_{\mathrm T}}< C_\ep\,,\qquad
\|u_\ep\|_{H^1H^k_{\mathrm T}(\Om_\eta^c)}< C\,. 
\]
Here the constant $C$ is independent of~$\ep$.
\end{lemma}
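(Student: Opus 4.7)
The plan is to establish both estimates by induction on $k$, leveraging the crucial observation that the tangential vector fields $X_j$ annihilate the conformal factor $f_\ep$ in the distributional sense. First I would dispose of the base case $k=0$: the Rayleigh-quotient identity combined with the bound $\la_\ep \leq C\ep^{d/2-1}$ from Lemma~\ref{L} gives
\[
\ep^{d/2-1}\int_{\Om_\eta}|du_\ep|^2 + \ka^{d/2-1}\int_{\Om_\eta^c}|du_\ep|^2 = \la_\ep \leq C\ep^{d/2-1},
\]
so $\|du_\ep\|_{L^2(M)}$ is bounded uniformly in $\ep$, while $\|du_\ep\|_{L^2(\Om_\eta^c)}$ is in fact $o(1)$. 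The $L^2$ bounds follow from the normalization of $u_\ep$ together with Sobolev embedding applied to this uniform $H^1$ estimate.

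For the inductive step, the key structural fact is that each $X_j$ is tangent to $\pd\Om_\eta$ throughout $U_1\supset\Om_\eta$ and supported in $U_0$, where $f_\ep$ is locally constant on either side of $\pd\Om_\eta$; hence $X_j f_\ep = 0$ as a distribution. Writing the eigenvalue equation in divergence form $\pd_i(A^{ij}\pd_j u_\ep) = -\la_\ep\sqrt{|g_\ep|}\, u_\ep$ with $A^{ij} := f_\ep^{d/2-1}\sqrt{|g_0|}\, g_0^{ij}$, I would apply a tangential vector field $X$ by testing against $X\tilde v$ in the weak formulation and commuting $X$ past $\pd_i$. Using $X f_\ep = 0 = X\sqrt{|g_\ep|}$, one finds that $w := Xu_\ep$ satisfies
\[
\int_M A^{ij}\pd_j w\,\pd_i \tilde v\,dx + \la_\ep\!\int_M\!\sqrt{|g_\ep|}\,w\,\tilde v\,dx = \int_M\!B^{ij}\pd_j u_\ep\,\pd_i\tilde v\,dx + \int_M b\, u_\ep\,\tilde v\,dx,
\]
where $B^{ij}$ and $b$ are coefficients arising from commutators of $X$ with $\pd_l$ and involve no derivative of $f_\ep$. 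Testing with $\tilde v = w$, Young's inequality absorbs the top-order term on the left and yields $\|w\|_{H^1(M)} \leq C_\ep\|u_\ep\|_{H^1(M)}$. Iterating, and using that the commutators $[X_i,X_j]$ remain tangential by Frobenius and can be re-expressed in the spanning family $\{X_1,\dots,X_s\}$, gives the bound $\|u_\ep\|_{H^1H^k_{\mathrm T}} < C_\ep$ for every $k$.

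For the $\ep$-uniform bound on $\Om_\eta^c$, I would use that $g_\ep = \ka g_0$ is smooth and uniformly elliptic there with $\ka \in [1,2]$, and that Lemma~\ref{L} together with the normalization of $u_\ep$ yields uniform bounds on $\la_\ep$ and on $\|u_\ep\|_{L^2(\Om_\eta^c)}$. Tangential differentiation of the equation on $\Om_\eta^c$ then produces $\ep$-uniform energy inequalities: the ellipticity constant does not degenerate and the commutator coefficients are $\ep$-independent. The $H^{k+1}$ bound on $(1-\chi_0)u_\ep$ follows from standard interior elliptic regularity, since this function is supported in $M\setminus U_1$, where $g_\ep$ is smooth.

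The main obstacle I expect is bookkeeping rather than substance: one must ensure that no factor of $\ep^{-1}$ leaks into the commutator terms under iteration, and that the interface matching conditions at $\pd\Om_\eta$ (continuity of $u_\ep$ and of the conormal flux $A^{ij}\nu_j\pd_i u_\ep$) propagate to each tangential derivative. Both issues are resolved by the identity $X_j f_\ep = 0$: tangential differentiation respects the stratification of $M$ into $\Om_\eta$ and $\Om_\eta^c$, so the interface plays no role in the induction beyond what is already encoded in the weak form.
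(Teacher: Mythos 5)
Your structural insight is sound---because the tangential fields satisfy $X_j f_\ep = 0$, tangential differentiation generates no singular terms, and this is exactly what drives the paper's estimate---but the implementation has a genuine gap that the paper deliberately avoids. When you ``commute $X$ past $\pd_i$'' to obtain a weak equation for $w := Xu_\ep$ and then ``test with $\tilde v = w$,'' you are implicitly assuming $w \in H^1(M)$; otherwise $w$ is not an admissible test function. That membership is precisely what the lemma asserts, so the argument as written is circular. This is the standard obstruction in $L^2$-regularity theory for divergence-form operators with rough coefficients. It can be repaired by replacing $X$ with a tangential difference quotient $D^X_h$ along the flow of $X$ (which maps $H^1$ into $H^1$, preserves $\Om_\eta$ and $\Om_\eta^c$ since $X$ is tangent to $\pd\Om_\eta$, and satisfies $D^X_h f_\ep = 0$), obtaining bounds uniform in $h$ and letting $h\to 0$; or by regularizing the coefficients first.

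The paper takes the second route: it approximates $f_\ep$ by smooth $f_n = F_n\circ\rho$, for which the eigenfunctions $u_n$ of $g_n = f_n g_0$ are genuinely smooth, carries out the tangential energy estimates on $u_n$ (where every integration by parts is licit), obtains bounds uniform in $n$, and passes to the limit via $u_n\to u_\ep$ in $H^1$. That choice has an extra payoff: the same approximating sequence yields Corollary~\ref{C.smooth}, which is needed in Step~5 of the proof of Theorem~\ref{T.main} to replace the discontinuous metric by a smooth one. A difference-quotient repair of your argument would still leave that smoothing step to be done separately.

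Two smaller points. First, $X\sqrt{|g_\ep|} = f_\ep^{d/2}\,X\sqrt{|g_0|}$ does not vanish in general---only $Xf_\ep$ does; what matters is that the expression contains no derivative of $f_\ep$, so the error it contributes is harmless, but the stated identity is incorrect. Second, you cannot simply ``differentiate the equation on $\Om_\eta^c$'' to get the $\ep$-uniform bound there, because the boundary data $B_\ep^\pm$ on $\pd\Om_\eta^\pm$ is not a priori known to be bounded in $H^k$---establishing that bound is part of what the lemma delivers. The paper instead works with the global weak formulation on all of $M$ with a cutoff adapted to $\pd\Om_\eta$, and notes that the contribution from $\Om_\eta$ carries the factor $f_n^{d/2-1}$ that keeps it $\ep$-uniformly bounded once the preceding estimates are in hand.
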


Since $u_\ep$ is then in the Sobolev space $H^1H^k_{\mathrm T}$ for
any positive $k$, $u_\ep$ is H\"older continuous with exponent~$\al$
for any $\alpha<1/2$ on account of the Sobolev embedding theorem~\cite[Theorem 10.2]{Be78}. Further, from the second
inequality in the lemma we infer that the boundary traces
$$
B_\ep^{\pm}:=u_\ep|_{\partial\Om_\eta^{\pm}}
$$
are bounded in $H^k(\pd\Om_\eta^\pm)$ uniformly in $\ep$, i.e.,
\begin{equation}\label{eq:B}
\|B_\ep^{\pm}\|_{H^k(\partial\Om_\eta^\pm)}<C\,.
\end{equation}
Here we are denoting by $\Om_\eta^\pm$ the connected components of the
set $\Om_\eta^c$ (which are known to be precisely two because $\Si$ is
a separating hypersurface). Notice, in particular, that $B_\ep^{\pm}$ are smooth functions. Furthermore, the eigenfunction $u_\ep$ is uniformly bounded in $L^\infty$, as established in the following argument.

\begin{corollary}\label{C:Linf}
There is a constant $C$ independent of $\ep$ such that $\|u_\ep\|_{L^\infty(M)}<C$.
\end{corollary}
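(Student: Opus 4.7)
The plan is to bound $u_\ep$ separately on the regular region $\Om_\eta^c$, where the metric is comparable to $g_0$, and on the small region $\Om_\eta$, where the metric degenerates as $\ep\to 0$. The outer estimate will come directly from Lemma~\ref{L.uniform}, while the inner one will come from the maximum principle, which is available because the ``potential'' $\ep\la_\ep$ is forced to be small by Lemma~\ref{L}.

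First, on $\Om_\eta^c$ I invoke the uniform mixed-norm bound $\|u_\ep\|_{H^1H^k_{\mathrm T}(\Om_\eta^c)}\leq C$ supplied by Lemma~\ref{L.uniform}, which holds for every $k$. For $k$ large enough, the same Sobolev embedding used above to obtain H\"older continuity applies on $\Om_\eta^c$ and yields a uniform estimate
\[
\|u_\ep\|_{L^\infty(\Om_\eta^c)}\leq C.
\]
In particular, the boundary traces $B_\ep^\pm$ are uniformly bounded in $L^\infty(\pd\Om_\eta^\pm)$; this also follows directly from~\eqref{eq:B} and Sobolev embedding on the hypersurfaces $\pd\Om_\eta^\pm$.

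Next, on $\Om_\eta$ the metric is $g_\ep=\ep g_0$, so $\Delta_{g_\ep}=\ep^{-1}\Delta_{g_0}$ and the eigenvalue equation takes the form
\[
-\Delta_{g_0} u_\ep = \ep\la_\ep\, u_\ep \quad\text{in }\Om_\eta,
\]
subject to the Dirichlet data $u_\ep|_{\pd\Om_\eta^\pm}=B_\ep^\pm$ controlled in the previous step. By Lemma~\ref{L}, $\ep\la_\ep\leq C\ep^{d/2}\to 0$, so for sufficiently small $\ep$ the coefficient $\ep\la_\ep$ is strictly smaller than the first Dirichlet eigenvalue of $-\Delta_{g_0}$ on $\Om_\eta$ (a positive constant depending only on $\eta$). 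The weak maximum principle then applies to the operator $-\Delta_{g_0}-\ep\la_\ep$ on $\Om_\eta$ and yields
\[
\|u_\ep\|_{L^\infty(\Om_\eta)} \leq C \max_\pm\|B_\ep^\pm\|_{L^\infty(\pd\Om_\eta^\pm)}\leq C.
\]
Combining the two bounds gives the corollary.

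The only potentially delicate point is the embedding of the anisotropic norm $H^1H^k_{\mathrm T}(\Om_\eta^c)$ into $L^\infty$, but this merely reuses the Sobolev embedding already invoked right after Lemma~\ref{L.uniform}; what is new is that the bound is uniform in $\ep$, which is precisely the content of the second inequality of Lemma~\ref{L.uniform}. Once the boundary values $B_\ep^\pm$ are controlled uniformly, the interior estimate on $\Om_\eta$ is an elementary maximum-principle computation made available by the smallness of $\ep\la_\ep$.
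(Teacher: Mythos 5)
Your argument is correct and reaches the same conclusion as the paper, with a mild but noticeable difference in route. The paper handles \emph{both} regions with the maximum principle: it first bounds the boundary traces $B_\ep^\pm$ in $L^\infty$ via~\eqref{eq:B} and Sobolev embedding on the hypersurfaces $\pd\Om_\eta^\pm$, and then applies the same maximum-principle absorption argument twice, once in $\Om_\eta^\pm$ (where the metric is $\ka g_0$ and the eigenvalue equation reads $\Delta u_\ep = -\ka\la_\ep u_\ep$) and once in $\Om_\eta$. You instead bound $u_\ep$ on $\Om_\eta^c$ directly from the uniform bound $\|u_\ep\|_{H^1 H^k_{\mathrm T}(\Om_\eta^c)}\leq C$ of Lemma~\ref{L.uniform} and the anisotropic Sobolev embedding, which is exactly the embedding the paper itself invokes just after Lemma~\ref{L.uniform} to get H\"older continuity; the key point you correctly emphasize is that on $\Om_\eta^c$ the constant is $\ep$-independent. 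This makes the maximum principle needed only once, on $\Om_\eta$. Both approaches rest on the same two pillars (Lemma~\ref{L.uniform} for the uniform Sobolev control away from $\pd\Om_\eta$, Lemma~\ref{L} for the smallness of $\la_\ep$), so the difference is one of economy rather than substance: you save one maximum-principle pass in exchange for invoking the interior Sobolev embedding rather than just the trace embedding on $\pd\Om_\eta^\pm$. One small stylistic remark: your invocation of the generalized weak maximum principle via the comparison $\ep\la_\ep < \la_1^D(\Om_\eta)$ is fine, but the paper's phrasing as an absorption inequality $\|u_\ep\|_{L^\infty(\Om_\eta)}< \|B_\ep^\pm\|_{L^\infty}+C\ep\la_\ep\|u_\ep\|_{L^\infty(\Om_\eta)}$ is perhaps more elementary, requiring only the classical a priori bound for the Dirichlet problem rather than an eigenvalue comparison; either way the resulting bound is the same.
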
 
\begin{proof}
In the domains $\Om_\eta^\pm$ the metric $g_\ep=\ka g_0$ is smooth, so the eigenfunction $u_\ep$ is smooth as well and satisfies the boundary value problem
\begin{equation*}
\Delta u_\ep=-\kappa\la_\ep u_\ep\quad\text{in } \Om_\eta^\pm\,,\qquad u_\ep|_{\partial{\Om_\eta^\pm}}=B_\ep^\pm\,.
\end{equation*}
Here and in what follows $\De$ stands for the Laplacian corresponding
to the metric $g_0$. The standard a priori bounds following from the
maximum principle then imply that
$$
\|u_\ep\|_{L^\infty(\Om_\eta^\pm)}<\|B_\ep^\pm\|_{L^\infty(\partial\Om_\eta^\pm)}+C \ka\la_\ep\| u_\ep\|_{L^\infty(\Om_\eta^\pm)}\,,
$$
for an $\ep$-independent constant $C$. Since $\la_\ep\to 0$ as $\ep\to 0$ by Lemma~\ref{L} and $B_\ep^\pm$ is bounded by Eq.~\eqref{eq:B}, we conclude that
$$
\|u_\ep\|_{L^\infty(\Om_\eta^\pm)}<C\,.
$$ 

Similarly, in the domain $\Om_\eta$ the eigenfunction $u_\ep$ is
smooth and solves the boundary value problem
\begin{equation*}
\Delta u_\ep=-\ep\la_\ep u_\ep\quad\text{in } \Om_\eta\,,\qquad u_\ep|_{\partial{\Om_\eta^\pm}}=B_\ep^\pm\,.
\end{equation*}
The same argument as before proves the desired estimate in $\Om_\eta$, and the corollary follows.
\end{proof}

The following result is a corollary of the proof of
Lemma~\ref{L.uniform}, and will be instrumental in Step~5 to prove the
existence of a smooth metric $g$ with the properties stated in
Theorem~\ref{T.main}. For the ease of notation, throughout the proof we will denote by $u_n$
the first eigenfunction of the metric $g_n$ described in this
corollary. One should not confuse this for the $n^{\mathrm{th}}$
eigenvalue of a fixed metric; indeed, only first
eigenfunctions will be relevant in what follows.

\begin{corollary}\label{C.smooth}
One can take a sequence of smooth metrics $g_n$, conformal to $g_0$
and with the same volume, such that the first eigenfunction $u_n$ of
$g_n$ has multiplicity one and converges to $u_\ep$ in
  $C^0(M)$ and in $C^k(S)$ for any compact set $S\subset M\backslash\partial\Om_\eta$.
\end{corollary}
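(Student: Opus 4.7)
The plan is to mollify the conformal factor $f_\ep$ in the direction transverse to $\pd\Om_\eta$ (preserving tangential smoothness) and to obtain $u_n\to u_\ep$ from $n$-uniform versions of Lemmas~\ref{L} and~\ref{L.uniform} applied to the smoothed eigenvalue problem.

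\textbf{Step 1 (construction of $g_n$).} Let $\rho$ be a smooth extension to $M$ of the $g_0$-signed distance to $\Si$, chosen so that $\Om_\eta=\{|\rho|<\eta\}$ inside the collar where $\rho$ is the actual distance. Fix a smooth nondecreasing $\chi\colon\RR\to[0,1]$ with $\chi(t)=0$ for $t\leq 0$ and $\chi(t)=1$ for $t\geq 1$, and set
\[
\tilde f_n(x):=\ep+(\ka-\ep)\,\chi\bigl(n(|\rho(x)|-\eta)\bigr)
\]
on the collar, extended by the constants $\ep$ on $\Om_\eta$ and $\ka$ on $\Om_\eta^c$ away from the collar. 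Multiplying by the positive scalar $c_n\to 1$ uniquely determined by the volume constraint, define $f_n:=c_n\tilde f_n$ and $g_n:=f_n\,g_0$. Each $g_n$ is smooth, conformal to $g_0$, has volume $\Vol(M,g_\ep)$, and since $\tilde f_n$ depends only on $|\rho|$, the derivatives of $f_n$ tangent to $\pd\Om_\eta$ vanish identically, mirroring the situation for $f_\ep$.

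\textbf{Step 2 (uniform bounds and simplicity).} I would then re-run the proofs of Lemmas~\ref{L} and~\ref{L.uniform} with $f_\ep$ replaced by $f_n$. The inputs those proofs rely on are the two-sided bounds on the conformal factor, its smoothness on each side of $\pd\Om_\eta$, and the vanishing of its tangential derivatives; all three persist uniformly in~$n$ under the construction above. This yields, for each fixed $k$ and each $\al<\tfrac12$, a constant $C$ independent of~$n$ with
\[
\|u_n\|_{C^{0,\al}(M)}+\|u_n\|_{H^1H^k_{\mathrm T}(\Om_\eta^c)}\leq C.
\]
Using $f_n\to f_\ep$ in every $L^p$, a min--max comparison of the Rayleigh quotients gives $\la_n\to\la_\ep$ and, at the next level, $\mu_n\to\mu_\ep$ for the second nontrivial eigenvalues. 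Since $\mu_\ep>\la_\ep$ by the simplicity established in Lemma~\ref{L}, the eigenvalue $\la_n$ is simple for all sufficiently large~$n$.

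\textbf{Step 3 (passage to the limit).} Arzel\`a--Ascoli applied to the uniform H\"older bound extracts a subsequence with $u_{n_k}\to u_\infty$ in $C^0(M)$. For any compact $S\subset M\setminus\pd\Om_\eta$ there is a $k_0$ such that $g_{n_k}=c_{n_k}g_\ep$ on $S$ once $k\geq k_0$, so interior Schauder estimates applied to $\Delta_{g_0}u_{n_k}=-\la_{n_k}f_{n_k}u_{n_k}$ upgrade the convergence to $C^k(S)$. Passing to the limit in the weak eigenvalue equation (using $g_n\to g_\ep$ in $L^p$ and the $L^2$-normalizations) identifies $u_\infty$ as a unit-norm first eigenfunction of $(M,g_\ep)$; by simplicity, $u_\infty=\pm u_\ep$. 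Choosing the sign of each $u_n$ accordingly forces the limit to be unique and hence makes the entire sequence converge.

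The main obstacle is verifying rigorously that Lemma~\ref{L.uniform} survives with $n$-independent constants even though $|\nabla f_n|=O(n)$ across $\pd\Om_\eta$. The point is that the test vector fields $X_j$ are tangent to $\pd\Om_\eta$, so the relevant commutators $[X_j,\Delta_{g_n}]$ involve only tangential derivatives of~$f_n$, which vanish identically; the dangerous transverse derivatives never enter the tangential iteration.
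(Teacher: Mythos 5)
Your construction of $g_n$ (one-dimensional mollification of the conformal factor in the transverse variable $\rho$, followed by a constant rescaling to fix the volume) and your mechanism (tangential derivatives of $f_n$ vanish, so the anisotropic estimates of Lemma~\ref{L.uniform} hold with constants uniform in $n$, giving uniform H\"older bounds; $H^1$ convergence of $u_n$ plus identification of the limit then upgrades to $C^0(M)$, and interior elliptic regularity upgrades to $C^k(S)$ away from $\partial\Om_\eta$) is essentially the paper's proof. The only cosmetic difference is that the paper cites the generic-perturbation result of Bando--Urakawa for the $H^1$ eigenfunction convergence and the simplicity of $\la_n$, whereas you re-derive these via a min--max comparison and an Arzel\`a--Ascoli/uniqueness argument.
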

\begin{proof}
In the proof of Lemma~\ref{L.uniform} it is shown that there is a sequence of smooth metrics $g_n:=f_ng_0$, with $f_n$ converging to $f_\ep$ pointwise in $M$ and in $C^k(S)$ for any compact set $S\subset M\backslash\partial\Om_\eta$, such that
\begin{equation}\label{corseq}
\lim_{n\to\infty}|\la_n-\la_\ep|=0\,, \qquad \lim_{n\to\infty}\|u_n-u_\ep\|_{C^0(M)}=0\,.
\end{equation}
Here, $\la_n$ is the first nontrivial eigenvalue of the Laplacian with the metric $g_n$ (which is simple) and $u_n$ is the corresponding (normalized) eigenfunction. By elliptic regularity the $C^0$ convergence of $u_n$ to $u_\ep$ can be promoted to $C^k$ convergence in any compact set $S$ contained in $M\backslash\partial\Om_\eta$. To fix the volume of the metric $g_n$ we can rescale it with a constant factor $\ga_n$ such that $\lim_{n\to\infty}\ga_n=1$ and
$$
|M|=\int_M dV_{\ga_ng_n}=\int_M \ga_n^{\frac d2}dV_n
$$ 
for all $n$, where $dV_n$ is the volume element associated with the
metric $g_n$. The corollary then follows trivially because this
rescaling does have any effect on the convergence~\eqref{corseq}.
\end{proof}

\subsubsection*{Step 3: Approximation by a harmonic function in $\Om_\eta$}
We will show that, as $\varepsilon \rightarrow 0$, $u_{\varepsilon}$ tends to constants outside of $\Omega_{\eta}$ while, inside $\Omega_{\eta}$,
it can be approximated by certain harmonic function $h$.
Indeed, let us define $h$ as the function in $\Om_\eta$ given by the only solution to
the boundary value problem
\[
\De h=0\quad\text{in } \Om_\eta\,,\qquad h|_{\pd\Om_\eta^\pm}=c_\pm\,,
\]
where the constants $c_\pm$ have been chosen so that
\begin{align*}
c_+^2|\Om_\eta^+|+ c_-^2|\Om_\eta^-|&=\ka_0^{-\frac d2}\,,\\
c_+|\Om_\eta^+|+ c_-|\Om_\eta^-|&=0\,,
\end{align*}
with
$$
\ka_0:=\ka|_{\ep=0}=\Big(1+\frac{|\Om_\eta|}{|\Om^c_\eta|}\Big)^{\frac{2}{d}}\,.
$$
These equations simply ensure that if we extend $h$ by constants
outside of $\Omega_{\eta}$, then the corresponding extension has mean
value 0 (i.e., it is orthogonal to constants) and its $L^2$ norm is 1.
This determines $c_\pm$ up to a global sign, which can be fixed by
requiring that 
\[
\pm c_\pm>0\,.
\]

\begin{proposition}\label{P.Ck}
If $k$ is any integer, one
can choose the sign of $u_\ep$ so that it converges to the harmonic
function $h$ in $\Om_\eta$ and to constants in the complement of this set:
\[
\lim_{\ep\searrow0} \|u_\ep-h\|_{C^k(\Om_\eta)}=0\,,\qquad
\lim_{\ep\searrow0} \|u_\ep- c_\pm\|_{C^0(\Om_\eta^\pm)}=0\,.
\]
\end{proposition}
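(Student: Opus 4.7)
The plan is to combine the eigenvalue bound $\la_\ep\leq C\ep^{d/2-1}$ of Lemma~\ref{L} with the uniform Sobolev estimates of Lemma~\ref{L.uniform} to pass to the limit as $\ep\searrow 0$, and then to identify the limit through the normalization conditions imposed on~$u_\ep$. The starting point is the identity
\[
\ep^{d/2-1}\int_{\Om_\eta}|du_\ep|^2+\ka^{d/2-1}\int_{\Om_\eta^c}|du_\ep|^2=\la_\ep\leq C\ep^{d/2-1},
\]
which, combined with $\ka\geq 1$ and $d\geq 3$, forces $\int_{\Om_\eta^c}|du_\ep|^2\to 0$. Together with the $L^\infty$ bound of Corollary~\ref{C:Linf} and the Rellich--Kondrachov theorem, this shows that along any subsequence $u_\ep|_{\Om_\eta^c}$ converges in $L^2$ to some $u_0$ with vanishing gradient, hence $u_0\equiv c^\infty_\pm$ on $\Om_\eta^\pm$ for certain constants.

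To identify these constants I pass the normalization conditions $\int_M u_\ep\,dV_\ep=0$ and $\int_M u_\ep^2\,dV_\ep=1$ to the limit. Using $\ka\to\ka_0$ and that the $\Om_\eta$ integrals are absorbed by the factor $\ep^{d/2}$ and the uniform $L^\infty$ bound, one recovers exactly the two algebraic equations defining $c_\pm$. Thus $c^\infty_\pm=\pm c_\pm$, and I fix the sign of $u_\ep$ so that $c^\infty_+>0$. Since the limit is now unique, the convergence will hold for the full family rather than just subsequentially. Moreover, since $u_\ep$ is bounded in $L^\infty$ and satisfies $\De u_\ep=-\ka\la_\ep u_\ep$ in $\Om_\eta^\pm$ with $\la_\ep\to0$, combining interior and boundary elliptic regularity with the uniform $H^k(\pd\Om_\eta^\pm)$ bounds on $B_\ep^\pm$ from~\eqref{eq:B} and Sobolev embedding shows that $u_\ep$ is bounded in $C^k(\overline{\Om_\eta^\pm})$ for every~$k$. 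Arzel\`a--Ascoli then promotes the $L^2$ convergence to $u_\ep\to c_\pm$ in $C^k(\overline{\Om_\eta^\pm})$, which in particular yields $B_\ep^\pm\to c_\pm$ in $C^k(\pd\Om_\eta^\pm)$.

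It remains to establish the $C^k$ convergence of $u_\ep$ to $h$ in $\Om_\eta$. The difference $v_\ep:=u_\ep-h$ solves $\De v_\ep=-\ep\la_\ep u_\ep$ in $\Om_\eta$ with boundary values $B_\ep^\pm-c_\pm\to 0$ in $C^k$; since the forcing tends to zero in $L^\infty$ and $\pd\Om_\eta$ is smooth, standard elliptic regularity gives $v_\ep\to 0$ in $C^k(\overline{\Om_\eta})$, completing the argument. The most delicate step in this programme is extracting the $C^k$ convergence of the boundary traces from the bounds of Lemma~\ref{L.uniform}: the mixed Sobolev norm only directly controls tangential derivatives, so to recover control of normal derivatives up to $\pd\Om_\eta$ one must bootstrap through elliptic regularity inside $\Om_\eta^\pm$, where the metric is smooth.
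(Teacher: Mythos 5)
Your argument is correct and arrives at the same conclusion, but it takes a mildly different route from the paper in the middle step. Both proofs start identically: the Rayleigh quotient identity together with $\la_\ep\le C\ep^{d/2-1}$ and $\ka\ge1$ forces $\int_{\Om_\eta^c}|du_\ep|^2\to0$, and the limit constants are pinned down by passing the normalization and zero-mean conditions to the limit via the $L^\infty$ bound. Where you diverge is in how the $L^2$-to-constant convergence on $\Om_\eta^\pm$ is upgraded: the paper interpolates the vanishing Dirichlet energy against the uniform mixed Sobolev bound $\|u_\ep\|_{H^1H^k_{\mathrm T}(\Om_\eta^c)}\le C$ to obtain $\|u_\ep-c'_\pm\|_{H^1H^k_{\mathrm T}(\Om_\eta^\pm)}\to0$ directly, which yields both the $C^\alpha$ convergence and the $H^k(\pd\Om_\eta^\pm)$ trace convergence in one stroke; you instead invoke Rellich--Kondrachov for a subsequential $L^2$ limit and then run an elliptic bootstrap inside $\Om_\eta^\pm$ (where $g_\ep=\ka g_0$ is smooth), feeding in the uniform trace bounds~\eqref{eq:B} and the $L^\infty$ bound to get a uniform $C^k(\overline{\Om_\eta^\pm})$ estimate, which you then combine with Arzel\`a--Ascoli. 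Both are valid; your version is somewhat more elementary (it avoids the mixed-norm interpolation inequality in favor of standard elliptic regularity in the smooth region), and incidentally proves $C^k$ convergence in $\Om_\eta^\pm$ rather than the stated $C^0$. Two cosmetic remarks: ``$c^\infty_\pm=\pm c_\pm$'' should read ``$c^\infty_\pm=c_\pm$ up to a global sign'', and the last elliptic-regularity step requires a short absorption argument (the forcing $-\ep\la_\ep u_\ep$ involves $u_\ep$ itself, so one writes it as $-\ep\la_\ep(v_\ep+h)$ and absorbs the $\ep\la_\ep\|v_\ep\|_{C^{k+2,\alpha}}$ term into the left-hand side, exactly as in the paper); ``standard elliptic regularity'' is a fair summary but this wrinkle should be flagged.
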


This result is proved in Section~\ref{S:P}. Observe that this
proposition shows that, for $\ep$ small enough, the nodal set of the
eigenfunction $u_\ep$ can be controlled using the harmonic function
$h$, a fact that will be exploited in the next step.

\subsubsection*{Step 4: Analysis of the harmonic function $h$}

The following proposition is proved in Section~\ref{S:prooflevel}. It
provides an asymptotic form of the harmonic function $h$ defined in
Step~4 as the width $\eta$ tends to 0, thereby allowing us to understand its nodal set for small $\eta$. It is stated in terms of local coordinates $(\rho,y)$, with $\rho$ the signed distance to the hypersurface $\Si$
with respect to the metric $g_0$, the sign being chosen so that
$\pd\Om_\eta^\pm=\rho^{-1}(\pm\eta)$, and $y=(y_1,\dots, y_{d-1})$ are
local coordinates in $\Si$.For details about the construction of the
coordinate system, see the proof of Lemma~\ref{L.uniform} in
Section~\ref{S:L2}. 

\begin{proposition}\label{P.harm}
Consider the function in $\Om_\eta$ given by
\[
\bh:=\frac{c_++c_-}2+\frac{c_+-c_-}{2\eta}\rho\,.
\]
For any nonnegative integers $j,k$, in the above local coordinates $(\rho,y)$ we then have
\[
\lim_{\eta\searrow0} \eta^j\big\|\pd_\rho^j D_y^k(h-\bh)\big\|_{L^\infty(\Om_\eta)}=0\,.
\] 
\end{proposition}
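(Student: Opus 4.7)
The plan is to rescale $\Om_\eta$ to a fixed cylinder, work with the renormalized function $v := (h-\bh)/\eta$, and combine a maximum principle argument with an asymptotic expansion in $\eta$ to bound $v$ uniformly in every mixed norm $C^j_r C^k_y$. In Fermi coordinates $(\rho, y) \in (-\eta, \eta)\times \Si$ the metric reads $d\rho^2 + g_\rho(y)$, where $g_\rho$ is the induced metric on the level set $\Si_\rho := \rho^{-1}(\mathrm{const})$, and the Laplacian takes the form $\De = \pd_\rho^2 + H(\rho,y)\, \pd_\rho + \De_{\Si_\rho}$, with $H$ the mean curvature of $\Si_\rho$ (smooth in $(\rho,y)$). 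Under the rescaling $r := \rho/\eta \in (-1,1)$ the domain becomes the fixed cylinder $I\times \Si$ with $I := (-1,1)$, and multiplying $\De h = 0$ by $\eta^2$ gives
\[
L_\eta h := \pd_r^2 h + \eta H(\eta r, y)\,\pd_r h + \eta^2 \De_{\Si_{\eta r}} h = 0, \qquad h(\pm 1, y) = c_\pm.
\]
In these coordinates $\bh(r,y) = A + Br$ with $A := (c_++c_-)/2$ and $B := (c_+-c_-)/2$, both of which remain bounded as $\eta \searrow 0$ (because $\ka_0 \to 1$ and $|\Om_\eta^\pm|$ converges to the volume of the corresponding component of $M\setminus\Si$), and $v$ satisfies
\[
L_\eta v = -B\,H(\eta r, y), \qquad v|_{r = \pm 1} = 0.
\]

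Since $h - \bh = \eta v$ and $\pd_\rho = \eta^{-1}\pd_r$, the proposition reduces to the uniform bound $\|v\|_{C^j_r C^k_y(I\times\Si)} \leq C_{j,k}$ for every $j,k\geq 0$; indeed,
\[
\eta^j\, \|\pd_\rho^j D_y^k(h-\bh)\|_{L^\infty(\Om_\eta)} = \eta\, \|\pd_r^j D_y^k v\|_{L^\infty(I\times\Si)} \leq C_{j,k}\,\eta \longrightarrow 0.
\]
The $L^\infty$ bound on $v$ follows from a comparison with the $y$-independent barrier $w(r) := \|BH\|_\infty (1-r^2)$: since $\De_y w = 0$, a direct computation gives $L_\eta w = -2\|BH\|_\infty(1 + \eta H r) \leq -\|BH\|_\infty$ for $\eta$ small, and combined with $L_\eta v = -BH \geq -\|BH\|_\infty$ the maximum principle yields $|v| \leq w \leq \|BH\|_\infty$.

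For higher-order bounds I would use a matched asymptotic expansion $v \sim v_0 + \eta v_1 + \eta^2 v_2 + \cdots$. The leading term is $v_0(r,y) := \tfrac12 B\,H(0,y)(1-r^2)$, the explicit solution of the 1D limit problem $\pd_r^2 v_0 = -B H(0,y)$, $v_0(\pm 1, y) = 0$; each subsequent $v_j$ is obtained by solving an analogous explicit 1D boundary value problem in $r$ with $y$ as parameter (the right-hand side involving only $v_0, \dots, v_{j-1}$ and derivatives of $H$ and $g_\rho$ at $\rho = 0$), yielding a smooth function on $\overline I \times \Si$ whose $C^\infty$ norm is independent of $\eta$. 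By construction the truncation $V_N := \sum_{j=0}^N \eta^j v_j$ satisfies $L_\eta V_N = -B H(\eta r, y) + O(\eta^{N+1})$ in every $C^m$ norm, so the remainder $R_N := v - V_N$ solves $L_\eta R_N = O(\eta^{N+1})$ in $C^m$ with zero Dirichlet data.

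The main technical obstacle is that $L_\eta$ degenerates in the tangential direction: its ellipticity constant is $O(\eta^2)$, so Schauder constants for $L_\eta$ grow as $\eta \searrow 0$. Since this growth is only polynomial in $\eta^{-1}$, iterated Schauder estimates applied to the remainder give $\|R_N\|_{C^{j,k}} \leq C\,\eta^{-M_{j,k}}\, \|L_\eta R_N\|_{C^{j,k-2}} \leq C\,\eta^{N+1-M_{j,k}}$ for some polynomial exponent $M_{j,k}$, which is $O(\eta)$ once $N$ is chosen large enough in terms of $j$ and $k$. Combined with the uniform smoothness of $V_N$ this yields $\|v\|_{C^j_r C^k_y} \leq C_{j,k}$, as required. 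The crux of the argument is precisely this trade-off between the order $N$ of the expansion and the polynomial blow-up rate of the Schauder constants in the singular parameter $\eta$.
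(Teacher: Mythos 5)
Your approach is genuinely different from the paper's. The paper rescales $\si=\pi(\rho+\eta)/2\eta$, writes $w:=h-\bh$ as a Fourier series $\sum w_n(y)\sin n\si$, and uses the explicit bound $\|(\pi^2n^2/4\eta^2-\De_\Si)^{-1}\|_{L^2\to L^2}\leq 4\eta^2/\pi^2 n^2$ to derive the single energy estimate $\|w\|_{H^{2k}(\Om_\eta)}\leq C\eta$, which then yields the $L^\infty$ statement by Sobolev embedding. This is a closed-form $L^2$ argument: one inversion of a nonnegative operator plus an absorption of the error term $Lw$ for small $\eta$. You instead work pointwise (barrier comparison for $L^\infty$, classical Schauder for derivatives) and you replace the absorption step by a matched asymptotic expansion $V_N=\sum_{j\leq N}\eta^j v_j$ to push the residual to $O(\eta^{N+1})$. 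Both routes are sound; yours is arguably more constructive and yields an explicit leading profile $v_0=\tfrac12 BH(0,y)(1-r^2)$, while the paper's spectral argument requires no expansion at all and closes in one step.

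Two places where you are telegraphic rather than wrong. First, the claim that Schauder constants for $L_\eta$ blow up only polynomially in $\eta^{-1}$ is correct but not automatic and deserves an argument. The clean way to see it is to undo the anisotropy by a second rescaling $z:=y/\eta$: in the $(r,z)$ variables $L_\eta$ becomes a \emph{uniformly} elliptic operator (with coefficients whose $C^m$ norms are bounded, and in fact improving, as $\eta\searrow0$) on a cylinder $I\times\Si_\eta$ of bounded geometry, so the Schauder constant is $\eta$-independent there, and reverting to $(r,y)$ costs a factor $\eta^{-b}$ for each $y$-derivative. This gives your $M_{j,k}=j+k$ (or $j+k+\al$) explicitly, and then $N\geq j+k$ suffices. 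Second, the Schauder inequality has the form $\|R_N\|_{C^{j+2,\al}}\leq C_\eta(\|L_\eta R_N\|_{C^{j,\al}}+\|R_N\|_{L^\infty})$, not the homogeneous version you wrote; this is harmless because your barrier already gives $\|R_N\|_{L^\infty}\leq C\eta^{N+1}$, but you should say so. With those two points spelled out the argument closes, and it is a legitimate alternative to the paper's Fourier-theoretic proof.
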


Here we have abused the notation to state the proposition in a
coordinate-dependent way, which is slightly more convenient for our purposes. An equivalent intrinsic statement, using
the vector fields $Y,X_1,\dots, X_s$ introduced in Step~2, is
\[
\lim_{\eta\searrow0} \eta^j\big\|Y^j X_1^{k_1}\cdots X_s^{k_s}(h-\bh)\big\|_{L^\infty(\Om_\eta)}=0\,.
\] 

An easy consequence of the proposition is
that the nodal set $h^{-1}(0)$ of the harmonic function $h$ is
diffeomorphic to $\Sigma$ (through a diffeomorphism of $M$ that is
close to the identity in the $C^0$ norm) provided that the width
$\eta$ is small enough. Moreover, $0$ is a regular value of $h$, so
that $h^{-1}(0)$ is robust under $C^1$-small perturbations of the
function $h$, a property that will be key in Step~5. The proof of this corollary is given in Section~\ref{S:coro}.

\begin{corollary}\label{C.levelset}
For any $\de>0$ there is some $\eta_0>0$ such that
if $\eta<\eta_0$ the nodal set
$h^{-1}(0)$ is given by $\Psi(\Si)$, where $\Psi:M\to M$ is a
diffeomorphism with $\|\Psi-\id\|_{C^0(M)}<\de$. Moreover, $0$ is a regular value of $h$, that is, $\nabla h(x)\neq 0$ at any point $x\in h^{-1}(0)$.
\end{corollary}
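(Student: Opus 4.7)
The plan is to leverage Proposition~\ref{P.harm} to reduce the study of $h^{-1}(0)$ to that of the nodal set of the linear model $\bh$, whose zeros form an explicit hypersurface parallel to $\Sigma$. The first ingredient is the asymptotic behavior of the constants $c_\pm$. Solving the two defining equations gives the explicit formula
\[
c_+^2 = \ka_0^{-d/2}\,\frac{|\Om_\eta^-|}{|\Om_\eta^+|\,|\Om_\eta^c|}\,,\qquad c_-=-c_+\,\frac{|\Om_\eta^+|}{|\Om_\eta^-|}\,.
\]
Since $|\Om_\eta^\pm|\to|M^\pm|>0$, $|\Om_\eta^c|\to|M|$, and $\ka_0\to1$ as $\eta\searrow0$, the constants $c_\pm$ converge to nonzero limits of opposite signs. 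In particular, there exists $C_0>0$ such that $c_+-c_-\geq C_0$ for all small~$\eta$. The zero set of $\bh$ in $\Om_\eta$ is the hypersurface $\{\rho=\rho^*\}$ with $\rho^*:=-\eta(c_++c_-)/(c_+-c_-)$, and consequently $|\rho^*|\leq C\eta$.

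The next step is to show that $h$ is strictly monotonic in $\rho$ on $\Om_\eta$. Applying Proposition~\ref{P.harm} with $j=1$, $k=0$ yields
\[
\pd_\rho h = \pd_\rho\bh + \pd_\rho(h-\bh) = \frac{c_+-c_-}{2\eta} + o\!\left(\tfrac{1}{\eta}\right),
\]
so $\pd_\rho h\geq C_0/(4\eta)$ throughout $\Om_\eta$ provided $\eta$ is small enough. This establishes that $0$ is a regular value of $h$ (extended by the nonzero constants $c_\pm$ outside $\Om_\eta$, so the nodal set is contained in $\Om_\eta$). By the intermediate value theorem and the implicit function theorem, for each $y$ in the local base coordinate there is a unique $\tilde\rho(y)\in(-\eta,\eta)$ with $h(\tilde\rho(y),y)=0$, and $\tilde\rho$ depends smoothly on $y$. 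To bound $\tilde\rho$, I would evaluate $h$ at $\rho^*$: Proposition~\ref{P.harm} with $j=k=0$ gives $|h(\rho^*,y)|=|h(\rho^*,y)-\bh(\rho^*,y)|=o(1)$, and combining with the lower bound on $\pd_\rho h$ yields
\[
|\tilde\rho(y)-\rho^*|\leq \frac{4\eta}{C_0}\,|h(\rho^*,y)| = o(\eta),
\]
so in particular $\sup_y|\tilde\rho(y)|\leq C\eta$. Hence $h^{-1}(0)$ is the graph $\{\rho=\tilde\rho(y)\}$ over $\Si$, a smooth hypersurface diffeomorphic to $\Si$.

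To finish, I would construct $\Psi$ as follows: in a tubular neighborhood of $\Si$ parameterized by $(\rho,y)$, set $\Psi(\rho,y):=(\rho+\chi(\rho)\tilde\rho(y),\,y)$, where $\chi$ is a smooth cutoff equal to $1$ near $\rho=0$ and vanishing for $|\rho|\geq\eta$; extend $\Psi$ by the identity on the rest of $M$. Then $\Psi(\Si)=\{\rho=\tilde\rho(y)\}=h^{-1}(0)$, and the displacement is controlled by
\[
\|\Psi-\id\|_{C^0(M)} \leq \sup_y|\tilde\rho(y)| \leq C\eta,
\]
which is smaller than $\de$ once $\eta<\eta_0$ is chosen sufficiently small. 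The Jacobian of $\Psi$ in these coordinates is $1+\chi(\rho)\pd\tilde\rho/\pd\rho=1$ in the $\rho$-direction and differs from the identity by terms of order $|\tilde\rho|_{C^1}$, so $\Psi$ is a diffeomorphism for small~$\eta$.

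The main obstacle is verifying that the tubular coordinates $(\rho,y)$ exist globally on $\Om_\eta$ and that $\tilde\rho$, defined locally via the implicit function theorem, assembles into a single smooth function on $\Si$. This is handled by the smoothness of the signed distance $\rho$ on the tubular neighborhood $U_0\supset\Om_\eta$ (already used in Step~2) together with the uniform strict monotonicity $\pd_\rho h\geq C_0/(4\eta)>0$, which makes $\tilde\rho(y)$ intrinsically defined as the unique zero of $\rho\mapsto h(\rho,y)$ on $(-\eta,\eta)$.
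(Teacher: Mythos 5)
Your proof is correct in substance but takes a genuinely different route from the paper. The paper passes to the rescaled coordinate $\si=(\rho+\eta)\pi/(2\eta)\in(0,\pi)$, in which $\bh$ becomes the fixed affine function $c_-+\tfrac{c_+-c_-}{\pi}\si$; Proposition~\ref{P.harm} then says exactly that $h\to\bh$ in $C^k((0,\pi)\times\Si)$ as $\eta\searrow0$, and Thom's isotopy theorem produces a diffeomorphism $\Psi$ close to the identity in $(\si,y)$, which becomes $C^0$-close to the identity (but not $C^1$-close) once one reverts to $(\rho,y)$. You instead stay in $(\rho,y)$, extract the quantitative bound $\pd_\rho h\geq C_0/(4\eta)>0$ from Proposition~\ref{P.harm} with $j=1$, locate the nodal set as the graph $\{\rho=\tilde\rho(y)\}$ by the intermediate value theorem and implicit function theorem, and build the diffeomorphism by hand. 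Your explicit computation of $c_\pm$ in terms of $|\Om_\eta^\pm|$ and $|\Om_\eta^c|$, showing that $c_+-c_-$ is bounded below uniformly in $\eta$, is a useful observation that the paper leaves implicit in the hypothesis of Thom's theorem.

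Two technical points in your write-up should be tightened. First, the Jacobian of $\Psi(\rho,y)=(\rho+\chi(\rho)\tilde\rho(y),y)$ in the $\rho$-direction is $1+\chi'(\rho)\tilde\rho(y)$, not $1+\chi(\rho)\pd\tilde\rho/\pd\rho$ (which is $1$ only because $\tilde\rho$ is $\rho$-independent). Second, and more seriously, your cutoff "vanishing for $|\rho|\geq\eta$" forces $|\chi'|\sim\eta^{-1}$; since $\sup|\tilde\rho|$ is of order $\eta$ (and $|\rho^*|/\eta$ can be close to $1$ when $|M^+|$ and $|M^-|$ differ substantially), the product $\chi'(\rho)\tilde\rho(y)$ need not be small, so the Jacobian could degenerate. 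The remedy is to let $\chi$ be supported in the fixed tubular neighborhood $U_0$ (whose width is independent of $\eta$), equal to $1$ on $\Om_\eta$. Then $|\chi'|\leq C$ with $C$ independent of $\eta$, hence $|\chi'\tilde\rho|=O(\eta)$; together with $|\pd_y\tilde\rho|=|\pd_yh/\pd_\rho h|\leq o(1)\cdot\tfrac{4\eta}{C_0}=o(\eta)$ (from Proposition~\ref{P.harm} with $j=0$, $k=1$), this guarantees the Jacobian is uniformly close to the identity and $\Psi$ is a diffeomorphism for small $\eta$, while $\|\Psi-\id\|_{C^0(M)}\leq\sup|\tilde\rho|\leq C\eta$ as you claim.
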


\subsubsection*{Step 5: The nodal set of the eigenfunction}

By Corollary~\ref{C.levelset}, the width $\eta$ can be chosen small
enough so that the zero set $h^{-1}(0)$ is regular and given by
$\Psi(\Si)$, where $\Psi$ is a
diffeomorphism of $M$ with $\|\Psi-\id\|_{C^0(M)}$ as small as one
wishes. Let us fix some $\eta$ for which this property holds.

In turn, Proposition~\ref{P.Ck} ensures that, for any fixed $\de>0$,
one can then choose a small $\ep$ so that the difference $u_\ep-h$ is
smaller than $\de$ in $C^k(\Om_\eta)$. For small enough~$\de$, Thom's
isotopy theorem~\cite[Section 20.2]{AR} then implies that the first
eigenfunction~$u_\ep$ has a regular connected component of the level
set~$u_\ep^{-1}(0)$ diffeomorphic to $\Si$, and that the corresponding
diffeomorphism can be chosen arbitrarily close to the identity in
$C^0(M)$. Moreover, this component, which is contained in $\Om_\eta$,
is the whole nodal set of $u_\ep$ because the second estimate of
Proposition~\ref{P.Ck} shows that $u_\ep$ does not vanish in
$\Om_\eta^\pm$ for small $\ep$ as
\[
\lim_{\ep\searrow 0}\|u_\ep-c_\pm\|_{C^0(\Om_\eta^\pm)}=0\,.
\]

By Corollary~\ref{C.smooth}, there is a sequence of metrics $g_n$,
conformal to $g_0$ and with the same volume, such that their first
eigenfunction $u_n$ converges to $u_\ep$ uniformly in $M$ and
additionally in $C^k(S)$ for any compact set $S\subset
M\backslash\partial\Om_\eta$, and the corresponding eigenvalue $\la_n$
has multiplicity one and converges to $\la_\ep$. Since $u_\ep^{-1}(0)\subset\Om_\eta$, the fact that $u_n\to
u_\ep$ uniformly in $M$ then ensures that, for any large enough $n$, $u_n^{-1}(0)\cap
\overline{\Om_\eta^{\pm}}=\emptyset$. Therefore, another application of Thom's isotopy
theorem yields that, $u_n^{-1}(0)$ is a regular level set given by $\Phi_n(\Si)$, with $\Phi_n$ a
diffeomorphism of $M$ with $\|\Phi_n-\id\|_{C^0(M)}$ arbitrarily
small. The theorem then follows by
taking $g_n$ as the metric in the statement, for any sufficiently large $n$.

\section{Proof of Lemma~\ref{L}}\label{S:L1}
Let us denote by $\rho$ the signed distance to the hypersurface $\Si$
with respect to the metric $g_0$, the sign being chosen so that $\pd\Om_\eta^\pm=\rho^{-1}(\pm\eta)$.
To prove that $\la_\ep$ tends to zero, let us take the Lipschitz function
\[
u:=\chi_{\Om_\eta^-} +[1-a(\eta+\rho)]\, \chi_{\Om_\eta^{\phantom{-}}}+(1-2a\eta)\, \chi_{\Om_\eta^+}\,,
\]
where the constant
\[
a:=\frac{\ka^{\frac d2} |\Om_\eta^c| +\ep^{\frac
    d2}|\Om_\eta|}{2\eta\ka^{\frac d2}|\Om_\eta^+| +\ep^{\frac d2}
  \int_{\Om_\eta} (\eta+\rho)}
\]
has been chosen so that
\[
\int_M u\, dV_\ep=0\,.
\]
Since $\rho\geq-\eta$ in $\Om_\eta$, it is clear that
\[
0<a<\frac C{\eta}
\]
for some absolute constant $C$.

Since 
\[
\int_M|du|_\ep^2\, dV_\ep=a^2\ep^{\frac d2-1}\int_{\Om_\eta}
|d\rho|^2=a^2\ep^{\frac d2-1}|\Om_\eta|\leq Ca^2\ep^{\frac d2-1}\eta\leq \frac{C\ep^{\frac d2-1}}\eta\,,
\]
where we have used that $|d\rho|^2=1$, and
\[
\int_Mu^2\, dV_\ep > \ka^{\frac d2}|\Om_\eta^-|> C\,,
\]
we can take $u$ as a test function to show that
\[
\la_\ep\leq q_\ep(u)\leq \frac{C\ep^{\frac d2-1}}\eta\,,
\]
which tends to zero as $\ep\searrow0$. 

Calling $\la_{2,\ep}$ the second nontrivial eigenvalue of $(M,g_\ep)$, we now prove that there
is some positive constant $\La$, independent of $\ep$, such that
\begin{equation}\label{lajep}
\la_{2,\ep}\geq \La\,.
\end{equation}
By the min-max principle, $\la_{2,\ep}$ is
\begin{equation}\label{minmax}
\la_{2,\ep}=\inf_{W\in\cW} \max_{\vp\in W\minus\{0\}} q_\ep(\vp)\,,
\end{equation}
where $\cW$ stands for the set of $3$-dimensional linear subspaces
of $H^1(M)$.

Following~\cite{Colin, EP14}, let us consider the direct sum decomposition 
\begin{equation*}\label{decomp}
H^1(M)=\cH_1\oplus \cH_2\,,
\end{equation*}
where
\begin{align*}
\cH_1&:=\big\{ \varphi\in H^1(M): \De \varphi|_{\Om_\eta}=0\big\}\,,\\
\cH_2&:=\big\{ \varphi\in H^1(M): \varphi|_{\Om_\eta}\in H^1_0(\Om_\eta)\,,\; \varphi|_{\Om_\eta^c}=0\big\}\,.
\end{align*}
Notice that these subspaces are orthogonal in the sense that
\begin{equation}\label{orthog}
\int_M g_0(d\varphi_1,d\varphi_2)=0
\end{equation}
for all $\varphi_1\in \cH_1$ and $\varphi_2\in\cH_2$. Accordingly, any function $\varphi\in H^1(M)$ can be written as
\[
\varphi= \varphi_1+\varphi_2
\]
in a unique way, with $\varphi_i\in \cH_i$. To prove that there is a positive lower bound for the eigenvalues $\la_{2,\ep}$, we can assume that $\la_{2,\ep}<C_0$ where $C_0$ is a constant independent of $\varepsilon$. Consider the set $\cW'$ of
$3$-dimensional subspaces of~$H^1(M)$ such that 
\[
q_\ep(\vp)\leq C_0+1
\]
for all nonzero $\vp\in \cW'$. Therefore, by Eq.~\eqref{minmax} we have 
\begin{equation}\label{minmax2}
\la_{2,\ep}=\inf_{W\in\cW'} \max_{\vp\in W\minus\{0\}} q_\ep(\vp)\,.
\end{equation}
The observation now is that if $\vp$ is in a subspace belonging
to $\cW'$, then
\begin{equation}\label{vp2}
\int \vp_2^2\, dV_\ep \leq C\ep\,\int \vp^2 \, dV_\ep\,,
\end{equation}
where $C$ is an $\ep$-independent constant. Indeed, 
\begin{align*}
C_0+1\geq q_\ep(\vp)&\geq
                      \frac{\ep^{\frac{d}{2}-1}\int_{\Om_{\eta}}|d\vp_2|^2}{\int
                      \vp^2\, dV_\ep}\\
&= \frac {1}\ep\frac{\int_{\Om_\eta}|d\vp_2|^2}{\int_{\Om_\eta}\vp_2^2}\frac{\|\vp_2\|_\ep^2}{\|\vp\|_\ep^2}\\
& \geq \frac{\la_{\Om_\eta}}\ep\frac{\|\vp_2\|_\ep^2}{\|\vp\|_\ep^2}\,,
\end{align*}
where the positive, $\ep$-independent constant $\la_{\Omega_\eta}$ is the first Dirichlet eigenvalue of $\Om_\eta$ with the metric $g_0$. This inequality implies the estimate~\eqref{vp2}. A straightforward computation also shows that~\eqref{vp2} implies
\begin{equation}\label{eq.est}
\|\vp\|_\ep^2\leq (1+C\ep^{\frac12})\|\vp_1\|_\ep^2\,.
\end{equation}
A second observation is that if $\vp$ is in a subspace belonging
to $\cW'$, since $\vp_1$ is harmonic in $\Om_\eta$, standard elliptic estimates, the trace inequality and Eq.~\eqref{eq.est} imply
\begin{equation}\label{vp1}
\int_{\Om_\eta}\vp_1^2\leq C\|\vp_1\|_{H^{\frac12}(\partial\Om_\eta)}^2\leq C\|\vp_1\|_{H^{1}(\Om_\eta^c)}^2\leq C\int_{\Om_\eta^c}\vp_1^2\,.
\end{equation}

Now we are ready to show that $\la_{2,\ep}$ is lower bounder by a positive $\ep$-independent constant. Using the orthogonality relation~\eqref{orthog} and the estimates~\eqref{eq.est} and~\eqref{vp1} we write, for
any nonzero $\vp\in W$ with $W\in \cW'$,
\begin{align*}
q_\ep(\vp)&=\frac{\int_M|d\vp_1|^2_\ep \, dV_\ep +
  \int_M|d\vp_2|^2_\ep \, dV_\ep}{\int \vp^2\, dV_\ep}\\
&\geq(1+C\ep^{\frac12})\frac{\int_M|d\vp_1|^2_\ep \, dV_\ep
  }{\int\vp_1^2\, dV_\ep}\\
&\geq (1+C\ep^{\frac12})\frac{\int_{\Om_\eta^c}|d\vp_1|^2}{\int_{\Om_\eta^c}\vp_1^2+ (\frac{\ep}{\ka})^{\frac d2}\int_{\Om_\eta}\vp_1^2}\\
&\geq (1+C\ep^{\frac12}]\frac{\int_{\Om_\eta^c}|d\vp_1|^2}{\int_{\Om_\eta^c}\vp_1^2}\\
&= (1+C\ep^{\frac12})\, q_{\Om_\eta^c}(\vp_1|_{\Om_\eta^c})\,.
\end{align*}
Here
\[
q_{\Om_\eta^c}(\psi):=\frac{\int_{\Om_\eta^c}|d\psi|^2}{\int_{\Om_\eta^c} \psi^2}
\]
is the Rayleigh quotient in $\Om_\eta^c$. From min-max
formulation~\eqref{minmax2} we then infer
\begin{align*}
\la_{2,\ep}&\geq (1+C\ep^{\frac12})\inf_{W\in\cW'} \max_{\vp\in
  W\minus\{0\}} q_{\Om_\eta^c}(\vp_1|_{\Om_\eta^c})\\
&\geq (1+C\ep^{\frac12})\, \mu_2\,,
\end{align*} 
where $\mu_2$ is the third Neumann eigenvalue of the domain $\Om_\eta^c$. Since $\Om_\eta^c$ has two connected components $\Om_\eta^{\pm}$, a simple computation shows that the first Neumann eigenvalues $\mu_0=\mu_1=0$, and $\mu_2$ is the smallest of the first nontrivial Neumann eigenvalues of $\Om_\eta^+$ and $\Om_\eta^-$, thus bounded by a positive $\ep$-independent constant. The lower estimate~\eqref{lajep} then follows, which establishes that $\la_\ep$ is a simple eigenvalue.

\section{Proof of Lemma~\ref{L.uniform}}\label{S:L2}
Let us begin by noticing that the function $f_\ep$ introduced in Step~1 can be written as
$f_\ep(x)=:F(\rho(x))$, where $\rho$ denotes the signed distance
to $\Si$, and 
\[
F(t):=\ka + (\ep-\ka)\, \uno_{[-\eta,\eta]}(t)\,,
\]
with $\uno_{[-\eta,\eta]}$ the indicator function of the interval. The dependence of $F$ on $\ep$ is not explicitly written for the sake of simplicity. Let
us now take a sequence of uniformly bounded smooth functions $F_n(t)$ that coincide with
$F(t)$ for $|t|\geq \eta$ and $|t|\leq \eta-\frac 1n$ and set
$f_n(x):=F_n(\rho(x))$, which is a smooth function because the signed
distance is smooth in the region where $F_n$ is nonconstant. 

It is 
standard (see e.g.~\cite{BU83}) that, as $n\to\infty$, the $k^{\text{th}}$ eigenvalue associated with the smooth metric $g_n:=f_ng_0$
converges to that of the discontinuous metric $g_\ep$ and that, if its
multiplicity is one, the corresponding eigenfunction converges
to the eigenfunction of the metric $g_\ep$ in $H^1$, possibly up to a
choice of normalizing factor. In particular, by Lemma~\ref{L} if
$u_n$ denotes the first nontrivial eigenfunction of $g_n$ with eigenvalue $\la_n$, we have that
\begin{equation}\label{H1uepun}
\lim_{n\to\infty}|\la_n-\la_\ep|=0\,, \qquad \lim_{n\to\infty}\|u_n-u_\ep\|_{H^1}=0\,,
\end{equation}
provided that $u_n$ is normalized so that
\begin{equation}\label{eq:unnorm}
\int u_n^2\, dV_n=1\,,
\end{equation}
with $dV_n$ the volume measure associated with the
metric~$g_n$. Observe that the volume of $M$ with the metric $g_n$ is
different from the volume with the metric $g_\ep$ (and hence $g_0$),
but this will not be relevant for our purposes. The
eigenfunctions~$u_n$ are smooth as the metric $g_n$ is
smooth too.

First, let us establish the energy estimate
\begin{equation}\label{unH}
\|X_1^{j_1}\cdots X_s^{j_s}u_n\|_{H^1} < C_\ep\,, 
\end{equation}
for $0\leq j_1+\cdots +j_s\leq k$, where of course the constant $C_\ep$ depends on the number of derivatives
that we consider and on $\varepsilon$, but not on $n$. For this, it is convenient to cover any given point in $U_1$ by a
patch $V$ of local coordinates $(\rho,y)$, with $y=(y_1,\dots, y_{d-1})$,
so that the metric $g_0$ reads as
\begin{equation}\label{g0ga}
g_0= d\rho^2+ \ga_{ij}(\rho,y)\, dy^i \, dy^j\,.
\end{equation}
In order to do this, let $\phi_t$ denote the local flow of the vector
field $\nabla\rho$, which is well defined in $U_0$. Any
point $x$ in $U_0$ can be written in a unique way as
\[
x=\phi_t p
\]
with $t\in I\supset [-\eta,\eta]$ and $p\in\Si$. It
we take local coordinates
$\hat y_i$ in an open set $\Si'\subset \Si$, it is standard that the desired local coordinates
$(\rho,y_i)$ on $M$ can then be defined by setting
\[
\rho(\phi_t x):= t\,,\qquad y_i(\phi_tp):= \hat y_i(p)\,.
\]
(Notice that $\rho$ is indeed the signed distance to $\Si$.)
The estimate~\eqref{unH} will clearly follow if we prove that the estimate
\begin{equation}\label{2un}
\|D_y^k u_n\|_{H^1(V)}< C_\ep
\end{equation}
holds for all $k$ (the constant $C_\ep$ is not uniform on $k$, of course).

To prove~\eqref{2un} for $k=0$, we integrate the eigenvalue equation
against a function~$\vp$ to find
\begin{equation}\label{weakn}
\int g_n^{ij}\, \pd_i u_n \, \pd_j \vp\, dV_n=\la_n\int u_n\, \vp\, dV_n\,,
\end{equation}
which is valid for all $\vp\in H^1(M)$. In particular, since the corresponding eigenvalues $\la_n$ tend to $\la_\ep$ as
$n\to\infty$, by Lemma~\ref{L} we can assume that $\la_n\leq C\ep^{\frac{d}{2}-1}$, so by
taking $\vp:= u_n$ in~\eqref{weakn} we obtain
\begin{align*}
\int f_n^{\frac  d2-1}|du_n|^2\leq C\ep^{\frac{d}{2}-1}\,.
\end{align*}
In particular, if $V$ is a patch covered by local coordinates
$(\rho,y)$ as above, an easy computation shows that
\[
\int_V \big[ (\pd_\rho u_n)^2+|D_y u_n|^2\big]\, d\rho\, dy\leq C\,,
\]
with the obvious notation, and then Eq.~\eqref{eq:unnorm} implies
\begin{equation}\label{eq:k0}
\|u_n\|_{H^1(V)}< C_\ep\,.
\end{equation}

The estimate~\eqref{2un} for $k>0$ follows by taking in Eq.~\eqref{weakn} 
\[
\vp:= [X_1^{j_1}\cdots X_s^{j_s}][X_1^{j_1}\cdots X_s^{j_s}]u_n\,,
\]
and considering all possible $j$'s with
\[
j_1+\cdots + j_s\leq k\,.
\]
In fact, in the local coordinate patch $V$, an easy partition of unity argument shows that this is equivalent to taking in Eq.~\eqref{weakn}
\begin{equation}\label{vpgen}
\vp:= [(\chi D_y)^\be][(\chi D_y)^\be] u_n\,,
\end{equation}
where $\chi$ is a cutoff function supported in~$V$
that is equal to 1 in a certain open subset of $V$, and $\be$ is a multiindex
ranging over the set $|\be|\leq k$. 

For simplicity, let us see the details for $k=1$, which implies
estimating two derivatives of $u_n$. Denoting by $O(1)$ a smooth, possibly matrix-valued,
function bounded by constants that do not depend on~$n$ and $\ep$, we start off by writing
\begin{align*}
\int g_n^{ij}\, \pd_i u_n &\,
  \pd_j\big[(\chi\,\pd_{y_l})\chi\pd_{y_l}u_n\big]\, dV_n= \\
&=-\int f_n^{\frac{d}2-1} \partial_{y_l}\big[|g_0|^{1/2}g_0^{ij}\pd_j\chi\pd_i u_n\big] \,
  (\chi\,\pd_{y_l}u_n)\,d\rho\,dy\\
& \qquad \qquad- \int (f_n)^{\frac{d}{2}-1}\, \partial_{y_l}\big[|g_0|^{1/2}g_0^{ij}\chi\partial_iu_n\big]\, \pd_j(\chi\pd_{y_l}u_n)\,d\rho\,dy\\
&= \int (f_n)^{\frac{d}{2}-1}O(1)(\partial
  u_n)(\chi\partial_{y_l}u_n)+\int f_n^{\frac{d}{2}-1}O(1)(\partial
  u_n)\partial(\chi\partial_{y_l}u_n)\\
&\qquad \qquad
+\int f_n^{\frac{d}{2}-1}O(1)(\partial_{y_l} u_n)^2-\int f_n^{\frac{d}{2}-1}g_0^{ij}\partial_i(\chi\partial_{y_l}u_n)\partial_j(\chi\partial_{y_l}u_n)\,.
\end{align*}
In the first equality we have integrated by parts and used that, since the function $f_n$ only depends on~$\rho$, we do not pick up any derivatives
of~$f_n$ with respect to $y_l$. In the second equality we integrate with the volume element $dV_0$ (so it is omitted under the integral sign) and make explicit all the terms that are of the form $O(1)$.

Furthermore, with the above choice of~$\vp$,
the RHS of Eq.~\eqref{weakn} reads:
\begin{multline*}
\la_n\int u_n (\chi\,\pd_{y_l})\chi\pd_{y_l}u_n\, dV_n
 =\la_n\int f_n^{\frac{d}{2}} O(1)\,u_n\, (\chi\pd_{y_l}u_n) -\la_n\int f_n^{\frac{d}{2}}(\chi\,\pd_{y_l}u^n)^2\,.
\end{multline*}
Therefore, using that
\begin{equation}\label{eq:triv}
\|\chi\pd_{y_l}u_n\|_{H^1}^2\leq \|u_n\|_{H^1}^2+C_\ep\int f_n^{\frac{d}{2}-1}g_0^{ij}\partial_i(\chi\partial_{y_l}u_n)\partial_j(\chi\partial_{y_l}u_n)
\end{equation}
and the Schwartz inequality, we readily arrive at:
\[
\|\chi\pd_{y_l}u_n\|_{H^1}^2\leq
C_\ep(\|u_n\|_{H^1}^2+\|u_n\|_{H^1}\|\chi\pd_{y_l}u_n\|_{H^1}) \,.
\]
Then, for $k\leq 1$ Eq.~\eqref{eq:k0} yields
\[
\|D_y^ku_n\|_{H^1(V)}< C_\ep
\]
after taking the derivatives with respect to $y_l$ for all $1\leq l\leq d-1$ in the coordinate patch $V$. Covering the set $U_0$ with coordinates
patches of the above form, we derive the upper bound
$$\|X_ju_n\|_{H^1} < C_\ep\,,$$
for all $1\leq j\leq s$. Notice that the reason for which we obtain
$\ep$-dependent constants is that $f_n$ is of order $\ep$ in
$\Om_\eta$, and it appears as a factor both in Eq.~\eqref{eq:unnorm}
and the integral of the RHS of the estimate~\eqref{eq:triv}. The case
of general $k$ follows from a completely analogous reasoning using the
function~$\vp$ specified in~\eqref{vpgen}, so we will omit the
details. The desired estimate~\eqref{unH} then follows.

Having already established~\eqref{unH}, the estimate
\begin{equation}\label{estp1}
\|u_n\|_{H^1H^k_{\mathrm T}}< C_\ep
\end{equation}
is now almost immediate. Indeed, in any coordinate patch $W\subset M$
whose closure does not intersect $\pd\Om_\eta$ and which is covered by local coordinates
$x=(x^1,\dots, x^d)$, the
components of the metric $g_n$ are bounded as $|D_x^k g_n^{ij}|\leq C_\ep$. Hence it is standard (and follows readily from the
above computation, with $\vp:=[(\chi D_x)^\be][(\chi D_x)^\be] u_n$
for some smooth cut-off function~$\chi$ supported in~$W$) that 
\[
\|D_x^k u_n\|_{L^2(W)}\leq C_\ep
\]
for any $k$. This establishes~\eqref{estp1} after covering the compact
manifold $M$ by a finite number of coordinate patches.

To prove the uniform estimate
\begin{equation}\label{estp2}
\|u_n\|_{H^1H^k_{\mathrm T}(\Om_\eta^c)}< C\,,
\end{equation}
we can use essentially the same argument. For $k=0$, we just observe that
$$\|u_n\|^2_{H^1(\Om_\eta^c)}< C$$
where we have used Eqs.~\eqref{eq:unnorm} and~\eqref{weakn}, the bound
for the eigenvalue $\la_n$ and the
fact that on $\Om_\eta^c$ we have
\begin{equation}\label{unifgn}
g_n^{ij}\, \pd_i\psi\, \pd_j\psi\geq C\big((\pd_\rho\psi)^2 + |D_y\psi|^2\big)
\end{equation}
with a constant $C>0$ that does not depend on~$\ep$.

Similarly, for
$k=1$, we use a coordinate patch $V$ covered by
coordinates $(\rho,y^1,\dots, y^{d-1})$ as above and an adapted
cutoff function $\chi$. Since we have a uniform bound
$|D_y^k g_n^{ij}|\leq C$ in $V$ by the definition of the coordinates,
we can write
\begin{equation}\label{eqrol}
\|\chi \partial_{y_l}u_n\|_{H^1(V\cap\Om_\eta^c)}^2<C\|u_n\|_{H^1(\Om_\eta^c)}^2+C\int f_n^{\frac{d}{2}-1}g_0^{ij}\partial_i(\chi \partial_{y_l}u_n)\partial_j(\chi \partial_{y_l}u_n)\,.
\end{equation}   
The preceding discussion using Eq.~\eqref{weakn} with
$\varphi=(\chi \partial_{y_l})(\chi \partial_{y_l})\,u_n$ implies that
the second term of the RHS in~\eqref{eqrol} is upper bounded by an
$\ep$-independent constant, thus establishing the uniform estimate for
$k=1$ after taking a finite covering of $M$ with $V$ patches and
summing all the contributions. For general~$k$, the reasoning is
completely analogous.  

Since the upper bounds~\eqref{estp1} and~\eqref{estp2} for $u_n$ are
uniform in $n$ and $u_n\to u_\ep$ in $H^1$, the lemma follows.

\section{Proof of Proposition~\ref{P.Ck}}\label{S:P}
A first observation is that, for any integer $k$, $u_\ep$ tends to the constants $c_\pm$ in the anisotropic Sobolev norm
$H^1H^k_{\mathrm T}(\Om_\eta^\pm)$, up to the choice of a global sign, i.e.,
\begin{equation}\label{uc}
\|u_\ep-c_\pm\|_{H^1H^k_{\mathrm T}(\Om_\eta^\pm)}=o(1)\,,
\end{equation}
where $o(1)$ stands for a quantity that tends to zero as
$\ep\searrow0$. In particular, this asymptotics implies that
\begin{equation}\label{boundary}
\lim_{\ep\searrow 0}\|B_\ep^\pm-c_\pm\|_{H^k(\partial\Om_\eta^\pm)}=0
\end{equation}
where we recall that $B_\ep^{\pm}:=u_\ep|_{\partial\Om_\eta^{\pm}}$.

In order to prove Eq.~\eqref{uc}, notice that $\la_\ep=o(1)$ by Lemma~\ref{L}, so that
\begin{align*}
\int_{\Om_\eta^c}|d u_\ep|^2&\leq\ka^{\frac d2-1}\int_{\Om_\eta^c}|d u_\ep|^2+\ep^{\frac
  d2-1}\int_{\Om_\eta}|d u_\ep|^2\\
&= \la_\ep=o(1)\,,
\end{align*}
because $\ka\geq 1$. This equation and the second estimate in Lemma~\ref{L.uniform}, which is uniform in $\ep$, implies by interpolation that there are constants $c'_\pm$ such that
$$
\|u_\ep-c'_\pm\|_{H^1H^k_{\mathrm T}(\Om_\eta^\pm)}\to 0\,,
$$
as $\ep\searrow 0$, in particular, by the Sobolev embedding theorem
for mixed norms, this implies convergence to constants in $C^\alpha(\Om_\eta^c)$ for any $\alpha<\frac12$. Let us choose the global sign of $u_\ep$ so that $\pm c'_{\pm}>0$. Finally, the fact that $c'_\pm=c_\pm$ follows from the property that $u_\ep$ is normalized and has zero mean, i.e.
\begin{align}
\ka^{\frac
    d2}\int_{\Om_\eta^c} u_\ep^2+\ep^{\frac
    d2}\int_{\Om_\eta} u_\ep^2&=1\,,\label{equ}\\
\ka^{\frac
    d2}\int_{\Om_\eta^c} u_\ep+\ep^{\frac
    d2}\int_{\Om_\eta} u_\ep&=0\,.\label{eqm}
\end{align}
Indeed, using the $L^\infty$ upper bound established in
Corollary~\ref{C:Linf}, we can take the limit $\ep\searrow 0$ in
Eqs.~\eqref{equ} and~\eqref{eqm} to obtain that, in this limit,
\begin{align*}
(c'_+)^2|\Om_\eta^+|+ (c'_-)^2|\Om_\eta^-|&=\ka^{-\frac d2}|_{\ep=0} \,,\\
c'_+|\Om_\eta^+|+ c'_-|\Om_\eta^-|&=0\,,
\end{align*}
so $c'_\pm=c_\pm$.

In the domain $\Om_\eta$ the eigenfunction $u_\ep$ satisfies the boundary problem
\begin{equation*}
\Delta u_\ep=-\ep\la_\ep u_\ep\quad\text{in } \Om_\eta\,,\qquad u_\ep|_{\partial{\Om_\eta^\pm}}=B_\ep^\pm\,,
\end{equation*}
and therefore the difference $u_\ep-h$ satisfies
\begin{equation*}
\Delta (u_\ep-h)=-\ep\la_\ep u_\ep\quad\text{in } \Om_\eta\,,\qquad (u_\ep-h)|_{\partial{\Om_\eta^\pm}}=B_\ep^\pm-c_\pm\,.
\end{equation*}
We recall that the Laplacian $\Delta$ is computed using the reference metric $g_0$. The $L^\infty$ norm of $u_\ep-h$ can be estimated using the maximum principle to yield
\begin{align*}
\|u_\ep-h\|_{L^\infty(\Om_\eta)}&\leq \|B_\ep^+-c_+\|_{L^\infty(\partial\Om_\eta^+)}+\|B_\ep^--c_-\|_{L^\infty(\partial\Om_\eta^-)}+
C \ep\la_\ep\| u_\ep\|_{L^\infty(\Om_\eta)}\\
&=o(1)\,,
\end{align*}
where we have used the asymptotics~\eqref{boundary} and
Corollary~\ref{C:Linf}. Therefore, standard elliptic
 estimates imply
\begin{align*}
\|u_\ep-h\|_{C^{k+2,\alpha}(\Om_\eta)}&<C\|u_\ep-h\|_{L^\infty(\Om_\eta)}+C\ep\la_\ep\|u_\ep\|_{C^{k,\alpha}(\Om_\eta)}\\&
<o(1)+C\ep\la_\ep\|h\|_{C^{k,\alpha}(\Om_\eta)}+C\ep\la_\ep\|u_\ep-h\|_{C^{k+2,\alpha}(\Om_\eta)}\\&
<o(1)+C\ep\la_\ep\|u_\ep-h\|_{C^{k+2,\alpha}(\Om_\eta)}\,,
\end{align*}
thus showing that
$$
\|u_\ep-h\|_{C^{k+2,\alpha}(\Om_\eta)}=o(1)\,.
$$
This completes the proof of the proposition.  

\section{Proof of Proposition~\ref{P.harm}}\label{S:prooflevel}

Let us describe the domain $\Om_\eta$ in terms of the local
coordinates $(\si,y)$, where
\[
\si:=\frac{(\rho+\eta)\pi}{2\eta}
\]
has been rescaled so that it ranges over the interval $(0,\pi)$. In
these coordinates, the Laplacian can be written as
\[
\De=:\frac{\pi^2}{4\eta^2}\,\pd_\si^2+\De_\Si-L\,,
\]
where $\De_\Si$ is the Laplacian on the hypersurface~$\Si$ (computed with
respect to the metric $\ga_{ij}(0,y)\, dy^i \, dy^j$, in the notation
of Eq.~\eqref{g0ga}) and $L$ is a differential operator of the form
\[
L=\frac{G_1}\eta\, \pd_\si + \eta G_2\, D_y^2+ \eta G_3\, D_y\,,
\]
where the functions $G_j=G_j(\si,y,\eta)$ are (possibly matrix-valued)
functions that depend smoothly on all their variables.

The difference $w:=h-\bh$ then satisfies the equation
\begin{equation}\label{eqharm}
\frac{\pi^2}{4\eta^2}\,\pd_\si^2w+\De_\Si w=Lw+\frac F\eta\,,\qquad w|_{\si=0}=w|_{\si=\pi}=0\,,
\end{equation}
with $F$ a smooth function of $(\si,y,\eta)$. In view of the Dirichlet
boundary conditions, let us expand $w$ in a Fourier series in the
variable~$\si$ of the form
\[
w=\sum_{n=1}^\infty w_n(y)\, \sin n\si\,.
\]
It is standard that the estimate presented in the statement of
Proposition~\ref{P.harm} will follow once we prove that, for any
integer $k$,
\[
\|w\|_{H^{2k}}\leq C\eta
\]
for a constant that depends on $k$, where the Sobolev norm is computed
by taking derivatives with respect to the variables $(\si,y)$ and
integrating over the set $(\si,y)\in(0,\pi)\times\Si$. 

In terms of the Fourier coefficients of $w$, this is equivalent to
showing that
\begin{equation}\label{HkFourier}
\sum_{n=1}^\infty \Big( n^{4k} \|w_n\|^2 + \|\De_\Si^kw_n\|^2\Big) <C\eta^2\,,
\end{equation}
where we are denoting by $\|\cdot\|$ the norm in $L^2(\Si)$. Let us
denote by $F_n$ and $R_n$ the $n^{\text{th}}$ Fourier coefficient of
the functions $F$ and 
\begin{equation}\label{RL}
R:=Lw\,,
\end{equation}
respectively. Writing
Eq.~\eqref{eqharm} as
\[
\frac{\pi^2n^2}{4\eta^2}w_n-\De_\Si w_n= -\frac{F_n}\eta- R_n\,,
\]
we can invert the positive self-adjoint operator
$\frac{\pi^2n^2}{4\eta^2}-\De_\Si $ in the closed manifold $\Si$ to obtain
\[
w_n=-\bigg(\frac{\pi^2n^2}{4\eta^2}-\De_\Si\bigg)^{-1}\bigg(\frac{F_n}\eta+ R_n\bigg)\,.
\]

Let us fix any integer $k\geq1$. As the $L^2(\Si)\to L^2(\Si)$ norm of the
operator $(\frac{\pi^2n^2}{4\eta^2}-\De_\Si)^{-1}$ is at most $4\eta^2/\pi^2n^2$, we then have
\begin{align*}
\sum_{n=1}^\infty n^{4k}\|w_n\|^2\leq  2 \sum_{n=1}^\infty \big( \eta^2
n^{4k-4}\|F_n\|^2 +\eta^4 n^{4k-4} \|R_n\|^2\big)\,.
\end{align*}
Notice that 
\[
\sum_{n=1}^\infty n^{4k-4}\|F_n\|^2 \leq C\|\pd_\si^{2k-2}F\|_{L^2}^2\,,
\]
where $\|\cdot\|_{L^2}$ refers to the norm computed with respect to
the variables $(\si,y)\in (0,\pi)\times\Si$ and, by the definition of
$R$ (Eq.~\eqref{RL}),
\begin{align*}
\sum_{n=1}^\infty n^{4k-4} \|R_n\|^2 \leq
C\|\pd_\si^{2k-2}R\|_{L^2}^2 \leq C\eta^{-2} \|\pd_\si^{2k-1}w\|_{L^2}^2
+C\eta^2\|\pd_\si^{2k-2}\De_\Si w\|_{L^2}^2\,.
\end{align*}
Hence
\begin{equation}\label{primera}
\sum_{n=1}^\infty n^{4k}\|w_n\|^2\leq C\eta^2\|\pd_\si^{2k-2}F\|_{L^2}^2+ C\eta^2\|w\|_{H^{2k}}^2\,.
\end{equation}

Likewise,
\begin{align}\label{segunda}
\sum_{n=1}^\infty \|\De_\Si^k w_n\|^2&\leq 
2\sum_{n=1}^\infty\bigg(\frac{\eta^2}{n^2}\|\De_\Si^k F_n \|^2 +
\bigg\|\De_\Si^k
\bigg(\frac{\pi^2n^2}{4\eta^2}-\De_\Si\bigg)^{-1}R_n\bigg\|^2\bigg)\,.
\end{align}
We can now use the definition of the function $R$ and the fact that the operator norm of
$\De_\Si(\frac{\pi^2n^2}{4\eta^2}-\De_\Si)^{-1} $ is smaller than~$1$ to write
\begin{align*}
\sum_{n=1}^\infty \bigg\|\De_\Si^k
\bigg(\frac{\pi^2n^2}{4\eta^2}-\De_\Si\bigg)^{-1}&R_n\bigg\|^2 \leq 2 \sum_{n=1}^\infty \bigg\|\De_\Si^k
\bigg(\frac{\pi^2n^2}{4\eta^2}-\De_\Si\bigg)^{-1}\bigg(\frac{G_1}\eta\, \pd_\si
w\bigg)_n\bigg\|^2 \\
& + 2\eta^2 \sum_{n=1}^\infty \bigg\|\De_\Si^k
\bigg(\frac{\pi^2n^2}{4\eta^2}-\De_\Si\bigg)^{-1}(G_2\, D_y^2w+ G_3\,
D_yw)_n\bigg\|^2\\
 &\leq 2 \sum_{n=1}^\infty \bigg\|\frac{\eta^2}{n^2}\De_\Si^k
 \bigg(\frac{G_1}\eta\, \pd_\si
 w\bigg)_n\bigg\|^2\\
&\qquad\qquad\qquad+ 2\eta^2 \sum_{n=1}^\infty \bigg\|\De_\Si^{k-1}(G_2\, D_y^2w+ G_3\,
 D_yw)_n\bigg\|^2\\
&\leq C\eta^2\|w\|_{H^{2k}}^2\,,
\end{align*}
Combining this equation with~\eqref{primera} and~\eqref{segunda} we infer that
\[
\|w\|_{H^{2k}}^2\leq C\sum_{n=1}^\infty\Big( n^{4k}\|w_n\|^2+
\|\De_\Si^k w_n\|^2\Big)\leq C\eta^2\|F\|_{H^{2k}}^2+ C\eta^2\|w\|_{H^{2k}}^2\,,
\]
which implies the estimate~\eqref{HkFourier} provided that $\eta$ is
small enough (e.g., if $C\eta^2<\frac12$). The proposition then
follows.

\section{Proof of Corollary~\ref{C.levelset}}
\label{S:coro}

Let us work with the rescaled local coordinates $(\si,y)$ introduced
in Section~\ref{S:prooflevel}. In these coordinates, the function
$\bh$ reads as
\[
\bh=c_-+\frac{c_+-c_-}\pi \si\,,
\]
so the zero set $\bh^{-1}(0)$ is $\{\pi
c_-/(c_--c_+)\}\times\Si$. Since the
derivative $\pd_\si \bh$ does not vanish and the functions $h(\si,y)$
and $\bh(\si,y)$ are arbitrarily close in $C^k((0,\pi)\times\Si)$ by
Proposition~\ref{P.harm}, Thom's isotopy theorem~\cite[Section
20.2]{AR} shows that $h^{-1}(0)$ is given by 
\[
\Psi\bigg(\bigg\{\frac{\pi
c_-}{c_--c_+}\bigg\}\times\Si\bigg)\,,
\]
where $\Psi$ is a diffeomorphism that can be taken to be arbitrarily
close to the identity in any $C^k$ norm, computed with respect to the
variables $(\si,y)$. Therefore, in the unrescaled variables
$(\rho,y)$, the diffeomorphism is $C^0$-close to the identity. (Observe
that the argument does not imply that the diffeomorphism is
$C^k$-close to the identity because the derivatives with respect to
$\rho$ introduce a large factor $\eta^{-1}$ in the estimates. In fact,
as was to be expected, what one would obtain is again some kind of
anisotropic bounds for the derivatives of $\Phi-\id$.)

\section{Proof of Theorem~\ref{T.cp}}
\label{S.cp}
Let us fix some ball $B\subset M$ and take a domain $D$ whose closure
is contained in $B$. This ensures that $\Si:=\pd D$ separates. Theorem~\ref{T.main} shows that there is a smooth metric $g$ conformal to $g_0$ and of the same volume, such that the nodal set of its first nontrivial eigenfunction $u$ is diffeomorphic to $\Si$ and the corresponding eigenvalue $\la$ is simple. Furthermore, in Step~5 of Section~\ref{S.proof} we showed that 
the gradient of $u$ does not vanish on its nodal set.

A theorem of Uhlenbeck~\cite{Uh76} ensures that one can take a metric $\tilde g$ that is a $C^{m+1}$-small conformal perturbation of the metric $g$ so that the
first eigenfunction is Morse, that is, all their critical points
are non-degenerate. It is obvious that one can take $\tilde g$ and $g$ with the same volume just multiplying by a constant factor, which does not change the eigenfunctions. Standard results from perturbation theory show
that the first nontrivial eigenfunction $\tilde u$ of the perturbed metric is close
in the $C^m(M)$ norm to $u$, so in particular the nodal set $\tilde \Si$ of
$\tilde u$ is contained in $B$ and is diffeomorphic to $\Si$. Here we are using the fact that the gradient of $u$ does not vanish on its nodal set and Thom's isotopy theorem.

Call $\tilde D$ the domain contained in $B$
that is bounded by $\tilde\Si$ and let us denote by~$\tilde\nabla$ the covariant derivative associated
with the metric $\tilde g$. Since $\tilde\nabla \tilde u$ is a
nonzero normal vector on $\tilde\Si$, which can be assumed to point
outwards without loss of generality, we can resort to Morse theory for
manifolds with boundary to show that the number of critical points of
$\tilde u$ of Morse index $i$ is at least as large as the
$i^{\text{th}}$ Betti number of the closure of the domain $\tilde D$, for $0\leq i\leq
d-1$. Since $\tilde D$ is diffeomorphic to $D$, the proposition then follows
by choosing the domain $D$ so that the sum of its Betti numbers is at
least $N$ (this can be done, e.g., by taking $\Si$ diffeomorphic
to a connected sum of $N$ copies of any nontrivial product of spheres,
such as $\SS^1\times\SS^{d-2}$, since in this case the first Betti
number is~$N$).

\section*{Acknowledgments}

The authors are supported by the ERC Starting Grants~633152 (A.E.) and~335079
(D.P.-S.). This work is supported in part by the
ICMAT--Severo Ochoa grant
SEV-2011-0087 (A.E.\ and D.P.-S.). S.S.\ was partially supported by CRC1060 of the
DFG. This work was started when S.S.\ was visiting ICMAT and he is grateful for
the enjoyable visit.

\bibliographystyle{amsplain}

\end{document}